\newtheorem{theorem}{Theorem}[section]
\newtheorem{corollary}[theorem]{Corollary}
\newtheorem{lemma}[theorem]{Lemma}
\newtheorem{proposition}[theorem]{Proposition}
\theoremstyle{definition}
\newtheorem{definition}[theorem]{Definition}
\newtheorem{remark}[theorem]{Remark}
\newtheorem*{question*}{Question}
\newtheorem*{conjecture*}{Conjecture}
\newtheorem{example}[theorem]{Example}
\newtheorem*{notation*}{Notation}
\newtheorem*{claim*}{Claim}
\newcounter{caseCount}
\newtheoremstyle{caseM}
  {5pt}
  {8pt}
  {}
  {}
  {\bfseries}
  {}
  {5pt plus 1pt minus 1pt}
  {\thmname{#1}~(\thmnumber{#2})~\underline{\thmnote{#3}}~:~}
\theoremstyle{caseM}
\newtheorem{caseM}[caseCount]{}
\numberwithin{equation}{theorem}
\def\Ext{\operatorname{Ext}}
\def\rad{\operatorname{rad}}
\def\soc{\operatorname{soc}}
\def\Hom{\operatorname{Hom}}
\def\End{\operatorname{End}}
\def\op{{\rm op}}
\def\Db{\mathsf{D}^{\rm b}}
\def\add{\operatorname{\mathsf{add}}}
\def\ind{\operatorname{\mathsf{ind}}}
\def\mod{\operatorname{\mathsf{mod}}}
\def\stmod{\operatorname{\underline{\mathsf{mod}}}}
\def\proj{\operatorname{\mathsf{proj}}}
\def\C{\mathcal{C}}
\def\Z{\mathbb{Z}}
\def\Triv{\mathrm{Triv}}
\def\Tria{\triangle}
\newcommand{\Module}[1]
{\mbox{$\begin{subarray}{c}
#1
\end{subarray}$}}
\newcommand{\mOdule}[2]
{\mbox{$\begin{subarray}{c}
#1\\
#2
\end{subarray}$}}
\newcommand{\moDule}[3]
{\mbox{$\begin{subarray}{c}
#1\\
#2\\
#3
\end{subarray}$}}
\newcommand{\modUle}[4]
{\mbox{$\begin{subarray}{c}
#1\\
#2\\
#3\\
#4
\end{subarray}$}}
\begin{document}
\setlength{\baselineskip}{15pt}
\title[Representation-finite gendo-symmetric algebras]{On representation-finite gendo-symmetric algebras with only one non-injective projective module}
\author{Takuma Aihara}
\address{Department of Mathematics, Tokyo Gakugei University, 4-1-1 Nukuikita-machi, Koganei, Tokyo 184-8501, Japan}
\email{aihara@u-gakugei.ac.jp}
\author{Aaron Chan}
\address{Graduate School of Mathematics, Nagoya University, Furocho, Chikusaku, Nagoya 464-8602, Japan}
\email{aaron.kychan@gmail.com}
\author{Takahiro Honma}
\address{Department of Mathematics, Tokyo University of science, 1-3 Kagurazaka, sinjukuku, Tokyo 162-8601, Japan}
\email{1119704@ed.tus.ac.jp}

\keywords{gendo-symmetric algebra, symmetirc algebra, representation-finite algebra}
\thanks{2010 {\em Mathematics Subject Classification.} 18E30, 16E35, 13D09}
\thanks{TA was partly supported by JSPS Grant-in-Aid for Young Scientists 19K14497.
AC is supported by Research Activity Start-up 19K23401.
}

\begin{abstract}
Motivated by the relation between Schur algebra and the group algebra of a symmetric group, along with other similar examples in algebraic Lie theory, Min Fang and Steffen Koenig \cite{FK1, FK2} addressed some behaviour of the endomorphism algebra of a generator over a symmetric algebra, which they called \emph{gendo-symmetric algebra}.  Continuing this line of works,  we classify in this article the representation-finite gendo-symmetric algebras that have at most one isomorphism class of indecomposable non-injective projective module.   We also determine their almost $\nu$-stable derived equivalence classes in the sense of Wei Hu and Changchang Xi \cite{HX}.  It turns out that a representative can be chosen as the quotient of a representation-finite symmetric algebra by the socle of a certain indecomposable projective module.
\end{abstract}
\maketitle
\section{Introduction}\label{sec:intro}

Taking the endomorphism ring is a fundamental construction in creating new algebras, and studying properties of the original algebra via the new ring is a central theme in representation theory.  For instance, the endomorphism algebra of a progenerator yields a Morita equivalent algebra, and the endomorphism algebra of a tilting module yields a derived equivalent algebra.  In particular, the existence of these equivalences allow one to go back-and-forth between these algebras and thus provide greater flexibility in study their representations.

Perhaps the most classical example where one recovers the original algebra $\Lambda$ from the endomorphism algebra $\Gamma=\End_\Lambda(M)$ is when the module $M$ has the \emph{double centraliser property}, i.e. when there is an algebra isomorphism
\[
\Lambda \cong \End_{\End_{\Lambda}(M)^{\op}}(M).
\]
In particular, $M$ is faithful as a $\Gamma^\op$-module.
The study of double centraliser property has a deep root in the algebraic Lie theory, namely, a fundamental example is given by the Schur-Weyl duality.  That is, when $M$ is the tensor space $(\mathbb{C}^n)^{\otimes n}$, $\Lambda$ is the group algebra $\mathbb{C}\mathfrak{S}_n$ of the symmetric group, which acts on $M$ via place permutation, and $\Gamma$ is the Schur algebra; see, for example, \cite{Gre} for details.

The \emph{Morita--Tachikawa correspondence} \cite{Mor,Tac} says that a if a $\Lambda$-module $M$ is projective as $\Gamma^\op$-module, then $M$ has the double centraliser property precisely when it is a \emph{generator-cogenerator} of the category $\mod \Lambda$ of finitely generated modules, i.e. when every indecomposable projective and indecomposable injective module appear as a direct summand of $M$.
It also gives an internal characterisation of the algebra $\Gamma$ by a certain homological condition. Apart from the Schur-Weyl duality example, another particular interesting examples arising in this theory is the \emph{Auslander algebras}, which appears as $\Gamma=\End_\Lambda(M)$ for a representation-finite algebra $\Lambda$ with $M$ the multiplicity-free module such that $\mod\Lambda=\add M$ \cite{Aus2}.

Motivated by examples of the double centraliser property in algebraic Lie theory, Fang and Koenig \cite{FK1} gave a refinement of the Morita--Tachikawa correspondence for the case when the base algebra $\Lambda$ is \emph{symmetric}, thus giving an internal characterisation of \emph{endo}morphism rings of a \emph{gen}erator(-cogenerator) over a symmetric algebra - or \emph{gendo-symmetric} for short \cite{FK2}.  Their studies further emphasise how closely gendo-symmetric algebras resemble various behaviour of symmetric algebras.  For related studies in this subject, see, for example, \cite{M1, M2, M3, CM, B}.

One beautiful result in the representation theory of symmetric algebras  is the derived (and stable) equivalence classification of the representation-finite symmetric algebras over an algebraically closed field, which is given by a certain extension of the Dynkin classification.  
\begin{definition}\label{def:RFSy}
Simplifying the notation in \cite{Asa}, we categorise the {\bf r}epresentation-{\bf f}inite {\bf sy}mmetric algebras by their \emph{RFSy-type}\footnote{We use RFSy to distinguish from the self-injective case, which is termed `RFS-type' in \cite{Asa}.} - a pair $(\Delta,f)$ in one of the following forms:
\begin{itemize}
\item $(\Delta,1)$ with $\Delta$ a simply-laced Dynkin graph (or quiver) - representing the class of trivial extension algebras of iterated tilted Dynkin types;
\item $(\mathbb{A}_{mn},1/m)$ with $m,n\in\Z_{>0}$ - representing the class of Brauer tree algebras;
\item $(\mathbb{D}_{3n},1/3)$ with $n \in \Z_{>1}$ - representing the class of modified Brauer tree algebra.
\end{itemize}
\end{definition}
Assuming for simplicity that the characteristic of the underlying field is not $2$, then every derived (or stable) equivalence class of representation-finite symmetric algebras is uniquely determined by an RFSy-type.  We refer to subsection \ref{subsec:RFS} for the details as well as the clarification in the characteristic $2$ case.

Being a generalisation of symmetric algebras, it is natural to ask if there is a similar classification of representation-finite \emph{gendo}-symmetric algebras.  A caveat in this problem is that, unlike symmetric algebras, derived equivalences need not preserve the gendo-symmetricity.  Fortunately, it is known from \cite{CM} that this problem can be resolved by restricting to \emph{almost $\nu$-stable} derived equivalences (in the sense of Hu and Xi \cite{HX}) instead; see subsection \ref{subsec:derived} for details.  Subsequently in the same paper, the second-named author and Marczinzik classified representation-finite \emph{biserial} gendo-symmetric algebra and their almost $\nu$-stable derived equivalence classes.  This time, we turn to another class of (non-symmetric) gendo-symmetric algebra - those that are the closest to being symmetric, i.e. of the form $\End_\Lambda(\Lambda\oplus M)$ with $M$ indecomposable non-projective.

To state the our result, recall first (see subsection \ref{subsec:RFS} for details) that if a representation-finite symmetric algebra $A$ is of RFSy-type $(\Delta,f)$, then for an indecomposable $A$-module, its $\tau$-orbit can be identified with a vertex, denoted by $v(M)$, of $\Delta$.  This vertex is well-defined up to the canonical non-trivial graph automorphism (if there is one) on $\Delta$.  On the other hand, for any  vertex $v$ of $\Delta$, one can define an \emph{extension graph} $\Delta^v$ which is given by adding an extra vertex to $\Delta$ and connecting it to $v$.
Now we can state the classification as follows.

\begin{theorem}\label{thm:main}
Suppose $B$ is a ring-indecomposable non-simple symmetric algebra over an algebraically closed field.
For any indecomposable non-projective $B$-module $M$, the following are equivalent.
\begin{enumerate}[\rm (1)]
\item $\End_B(B\oplus M)$ is a representation-finite gendo-symmetric algebra;

\item $B$ is representation-finite symmetric of type $(\Delta,f)$ and $v:=v(M)\in \Delta_0$ satisfies one of the following (mutually exclusive) conditions.
\begin{itemize}
\item $f=1$ and the extension graph $\Delta^v$ is Dynkin.
\item $(\Delta,f)=(\mathbb{A}_{mn},1/m)$ for some $m>1$, $n\geq 1$, and  $v$ is of valency 1.
\item $(\Delta,f)=(\mathbb{D}_{3r},1/3)$ for some $r>1$ and $v$ is the valency 1 vertex in the longest branch of the graph $\mathbb{D}_{3r}$.
\end{itemize}

\item There is a representation-finite symmetric algebra $C$ and an indecomposable projective module $P$ such that $\End_B(B\oplus M)$ is almost $\nu$-stable derived equivalent to $C/\soc P$.
\end{enumerate}
\end{theorem}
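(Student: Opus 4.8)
The plan is to establish the cycle of implications $(3)\Rightarrow(1)\Rightarrow(2)\Rightarrow(3)$, with the real work concentrated in the combinatorial analysis of $(1)\Rightarrow(2)$ and the explicit derived equivalence of $(2)\Rightarrow(3)$.

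I would dispose of $(3)\Rightarrow(1)$ first, as it is the formal direction. The algebra $\End_B(B\oplus M)$ is gendo-symmetric by construction, with no hypothesis needed: $B$ symmetric forces its injectives to be projective, so the generator $B\oplus M$ is automatically a cogenerator and $\End_B(B\oplus M)$ is the endomorphism ring of a generator-cogenerator over a symmetric algebra; as $M$ is indecomposable non-projective it contributes exactly one non-injective indecomposable projective. Only representation-finiteness then needs transporting, and since derived equivalences preserve representation-finiteness it suffices that $C/\soc P$ be representation-finite. Because a representation-finite symmetric $C$ falls into one of the three RFSy-families of Definition~\ref{def:RFSy}, I would check in each family that the socle-quotient $C/\soc P$ remains representation-finite directly from its Auslander--Reiten quiver.

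For $(1)\Rightarrow(2)$, write $\Gamma:=\End_B(B\oplus M)$; by the previous paragraph $(1)$ amounts to $\Gamma$ being representation-finite. Using the idempotent $e\in\Gamma$ with $e\Gamma e\cong B$, the functor $X\mapsto eX$ is dense onto $\mod B$, so representation-finiteness of $\Gamma$ forces $B$ to be representation-finite, hence of some RFSy-type $(\Delta,f)$. The substance of the direction is then to single out which $\tau$-orbits $v=v(M)$ keep $\Gamma$ representation-finite. My approach is to read the Auslander--Reiten combinatorics of $\Gamma$ off that of $B$: since $B$ is representation-finite symmetric, its stable category is a mesh category governed by $\Delta$, and adjoining $M$ as a summand enlarges the governing graph by a single node attached at $v$, that is, to the extension graph $\Delta^v$. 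Representation-finiteness of $\Gamma$ then becomes the requirement that $\Delta^v$ be Dynkin when $f=1$, and the stated valency conditions on $v$ in the Brauer-tree and modified-Brauer-tree cases. I expect this translation to be the principal obstacle, as it requires controlling how the mesh relations deform under the one-point enlargement, resolving the ambiguity of $v$ up to the graph automorphism of $\Delta$, and handling separately the non-simply-laced scaling recorded by $f\neq 1$ together with the characteristic-$2$ exceptions flagged after Definition~\ref{def:RFSy}.

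Finally, for $(2)\Rightarrow(3)$ I would build $C$ directly from the extension data: take the representation-finite symmetric algebra of RFSy-type $(\Delta^v,1)$ when $f=1$, and in the remaining two cases the Brauer-tree, respectively modified-Brauer-tree, algebra obtained by grafting a new edge at the distinguished valency-$1$ vertex. A primitive idempotent supported at the new node picks out an indecomposable projective $P=Ce$, and the almost $\nu$-stable derived equivalence $\Gamma\simeq C/\soc P$ is produced from an explicit two-term tilting complex over $C/\soc P$ whose endomorphism ring is $\Gamma$; the almost $\nu$-stable condition is then verified directly against the definition of Hu and Xi \cite{HX}. Since $(2)$ guarantees the relevant graph is Dynkin, the algebra $C$ so constructed is genuinely representation-finite, which closes the cycle.
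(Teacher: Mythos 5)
Your outline has the right global shape (the paper also runs the cycle through the three RFSy-families), but two of the three implications rest on claims that do not hold up as stated. The most serious gap is in $(1)\Rightarrow(2)$, which is the substance of the theorem and which you essentially restate rather than prove. The proposed mechanism --- that adjoining $M$ to $B$ ``enlarges the governing graph by a single node attached at $v$'' so that representation-finiteness of $\Gamma=\End_B(B\oplus M)$ becomes the condition that $\Delta^v$ be Dynkin --- is not a precise statement ($\Gamma$ is not self-injective, so its AR-quiver is not a mesh category of the form $\mathbb{Z}\Delta'/G$), and it demonstrably gives the wrong answer outside the $f=1$ case: for a Brauer tree algebra of type $(\mathbb{A}_{mn},1/m)$ with $m>1$ and $v$ an interior vertex adjacent to an endpoint, the extension graph $\Delta^v$ is Dynkin of type $\mathbb{D}$, yet the theorem (and Proposition~\ref{symmNaka case}) says $\Gamma$ is representation-infinite. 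The paper's actual arguments are case-specific and quite different: for $f=1$ it proves the isomorphism $\End_{\Triv(KQ)}(\Triv(KQ)\oplus D(KQe_v))\cong\Triv(KQ')/\soc P_x$ (Lemma~\ref{lem:triv(tria)}) and then invokes the Drozd--Kirichenko rejection lemma plus the classification of representation-finite trivial extensions; for $(\mathbb{A}_{mn},1/m)$ it quotes B\"{o}hmler's computation for symmetric Nakayama algebras; and for $(\mathbb{D}_{3r},1/3)$ it does an explicit row-by-row separated-quiver analysis for $B_2^\epsilon$ and propagates it to $B_n^\epsilon$ via idempotent truncation (Lemma~\ref{IdempRingIsom}). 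None of this is recoverable from the one-point-enlargement heuristic.

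Two further points. In $(3)\Rightarrow(1)$ you appeal to ``derived equivalences preserve representation-finiteness''; this is not an available general fact for non-self-injective algebras, and the paper deliberately avoids it: the correct route is that \emph{almost $\nu$-stable} derived equivalences preserve the representation dimension (Proposition~\ref{prop:nusta}) and that representation dimension $\leq 2$ characterises representation-finiteness by Auslander, which is exactly Proposition~\ref{prop:main(3)=>(1)}. (Also, representation-finiteness of $C/\soc P$ needs no AR-quiver check: it is a quotient of the representation-finite algebra $C$.) In $(2)\Rightarrow(3)$ your plan to exhibit an explicit two-term tilting complex over $C/\soc P$ with endomorphism ring $\Gamma$ is much harder than what the paper does and is not substantiated; the paper instead transports $M$ by a stable equivalence to a standard representative of the base (using Proposition~\ref{prop:nusta}(3) to get the almost $\nu$-stable derived equivalence of the gendo algebras for free) and then shows the resulting endomorphism ring is literally \emph{isomorphic} to $C/\soc P$ (equation~\eqref{eq:Triv(KQ')/soc}, Lemma~\ref{EndMBTArad}, and the Nakayama computation from \cite{CM}). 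You would also need to track the characteristic-$2$ parameter $\epsilon$ in the modified Brauer tree case, which your construction of $C$ does not address.
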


This article is structured as follows.  In Section \ref{sec:prelim}, we recall all basic facts required for our result.  In Section \ref{sec:trivext}, we will give some detail analysis on certain gendo-symmetric algebras with trivial extension base algebra.  In Section \ref{sec:modBTree}, we deal specifically with the case of modified Brauer tree.  In Section \ref{sec:final}, we will prove the main theorem and also give a few remarks on it.

\section{Preliminaries}\label{sec:prelim}

Throughout this paper, algebras are always assumed to be finite-dimensional over an algebraically closed field $K$.
By module we mean a finitely generated right module.
The $K$-linear dual is denoted by $D:=\Hom_K(-,K)$.
We assume, unless otherwise stated, that all of our algebras are ring-indecomposable, i.e. its $\Ext^1$-quiver is connected.
For simplicity, we also assume all algebras are basic.

For an algebra $\Lambda$, we denote by $\mod\Lambda$ the category of modules over $\Lambda$, by $\ind\Lambda$ the set of isomorphism classes of indecomposable $\Lambda$-modules.  Its bounded derived category is denoted by $\Db(\mod\Lambda)$.
The opposite algebra of $\Lambda$ is denoted by $\Lambda^\op$.

When $\Lambda$ is $\Z$-graded, we denote the category of $\Z$-graded modules by $\mod^\Z\Lambda$.  The stable category (i.e. the category with the same objects but Hom-spaces are given by quotienting out the morphisms that factor through a projective module) of $\mod\Lambda$ and of $\mod^\Z\Lambda$ are denoted by $\stmod\Lambda$ and $\stmod^\Z\Lambda$ respectively.

\subsection{Constructions of algebras}\label{ssec:prelim alg}
\begin{definition}
Let $\Lambda$ be an algebra.
\begin{enumerate}[(1)]
\item A $\Lambda$-module $M$ is a \emph{generator} if for any $X\in\mod\Lambda$, there is an epimorphism $M'\to X$ with $M'\in\add M$.
This is equivalent to having every indecomposable (isoclass of) projective module appears as a direct summand of $M$.

\item A \emph{gendo algebra over $\Lambda$} is a triple $(\Gamma, M, \Lambda)$ of algebras $\Gamma, \Lambda$ and generator $M\in \mod\Lambda$ such that $\Gamma \cong \End_\Lambda(M)$.  We often call the algebra $\Gamma$ here a gendo algebra for simplicity.  We also call $\Lambda$ the \emph{base algebra}.

\item An algebra $\Lambda$ is \emph{symmetric} if $\Lambda\cong D\Lambda$ as $\Lambda$-$\Lambda$-bimodule.  A \emph{gendo-symmetric algebra} is a gendo algebra whose base algebra is symmetric.
\end{enumerate}
\end{definition}

\begin{remark}
Every algebra can be realised as a gendo algebra via $(\Gamma,\Gamma,\Gamma)$.  More generally, $\Gamma$ is Morita equivalent to $\Lambda$ if and only if $(\Gamma,M,\Lambda)$ is a gendo algebra with $M$ a progenerator (i.e. $M\in \add\Lambda$ such that $\add M = \add \Lambda$). In particular, a gendo-symmetric algebra being symmetric is equivalent to $M$ being a progenerator.
\end{remark}

We recall in particular that for a symmetric algebra $\Lambda$, there is a natural isomorphism of the $\Lambda$-dual and the $K$-dual functors:
\begin{align}\label{eq:dual}
\Hom_\Lambda(-,\Lambda) \cong \Hom_K(-,K) : \mod\Lambda \xrightarrow{\sim} \mod\Lambda^\op
\end{align}

An important class of symmetric algebra comes from the following construction.
\begin{definition}
Let $\Lambda$ be an algebra.  The \emph{trivial extension of $\Lambda$}, denoted by $\Triv(\Lambda)$, is the space $\Lambda\oplus D\Lambda$ equipped with multiplication
\[
(a,f)(b,g) := (ab, ag+fb).
\]
\end{definition}

One can check that $D\Lambda$ is an ideal in $\Triv(\Lambda)$, and that $\Triv(\Lambda)$ is always symmetric via the isomorphism $\Triv(\Lambda)\xrightarrow{\sim} D\Triv(\Lambda)$ given by $(a,f)\mapsto[(b,g)\mapsto f(b)+g(a)]$.  Note that there is a canonical embedding functor $\mod \Lambda \to \mod \Triv(\Lambda)$ induced by the algebra homomorphism $\Triv(\Lambda) \twoheadrightarrow \Lambda \cong \Triv(\Lambda)/D\Lambda$.

\begin{definition}
Let $\Lambda$ and $\Gamma$ be algebras.
Take a $\Gamma$-$\Lambda$-bimodule ${}_\Gamma M_\Lambda$, a $\Lambda$-$\Gamma$-bimodule ${}_\Lambda N_\Gamma$, a $\Gamma$-$\Gamma$-bimodule homomorphism $f:M\otimes_\Lambda N\to \Gamma$, and a $\Lambda$-$\Lambda$-bimodule homomorphism $g:N\otimes_\Gamma M \to \Lambda$.  
Then these induce a multiplication map on the vector space
\[\begin{pmatrix} \Lambda & N \\ M& \Gamma \end{pmatrix} := \left\{\begin{pmatrix} a & n \\ m & b \end{pmatrix} \middle|\, a\in \Lambda, b\in \Gamma, m \in M, n\in N \right\},\]
where elements are multiplied like matrix multiplication.
This algebra is called the \emph{Morita context ring} associated to $({}_\Gamma M_\Lambda, {}_\Lambda N_\Gamma, f, g)$.
\end{definition}

\begin{example}\label{eg:Morita-context}
Let $\Lambda$ be an algebra.
\begin{enumerate}
\item A Morita context $(_{\Gamma}M_\Lambda,{}_\Lambda N_{\Gamma},f,g)$ defines a Morita equivalence between $\Lambda$ and $\Gamma$ if and only if $f$ and $g$ are surjective (or equivalently, any one of them is surjective).

\item Take any $M\in \mod \Lambda$.  Then the gendo algebra $\End_\Lambda(\Lambda\oplus M)$ is isomorphic to a Morita context ring
\[
\begin{pmatrix}
\Lambda & {}_\Lambda\Hom_\Lambda(M,\Lambda)_\Gamma \\ {}_\Gamma M_\Lambda & \Gamma
\end{pmatrix},
\]
where $\Gamma:=\End_\Lambda(M)$ and the defining bimodule homomorphisms are
\begin{align*}
\begin{array}{ccc}
M \otimes_\Lambda \Hom_\Lambda(M,\Lambda) & \to& \Gamma \\
m\otimes f & \mapsto & [m' \mapsto m f(m')]
\end{array} \text{ and }
\begin{array}{ccc}
\Hom_\Lambda(M,\Lambda)\otimes_\Gamma M & \to& \Lambda \\
f\otimes m & \mapsto& f(m)
\end{array}
\end{align*}
This is sometimes called \emph{Auslander context} \cite{Buc}.

\item The special case of a Morita context ring where the lower left corner $M$ is zero is called the (upper) \emph{triangular matrix ring} of a $\Lambda$-$\Gamma$-bimodule ${}_\Lambda N_\Gamma$, and will be denoted by $\Tria({}_\Lambda N_\Gamma)$.  If, furthermore, $\Gamma$ is the simple algebra $K$, then the associated triangular matrix ring is also called the \emph{one-point coextension algebra of $\Lambda$ by the left $\Lambda$-module ${}_\Lambda N$}.
\end{enumerate}
\end{example}

\subsection{Representation type of algebras}\label{ssec:prelim reptype}

From now on, we always count indecomposable objects up to isomorphism.

An idempotent-closed additive category $\C$ is \emph{of finite type} if it admits only finitely many indecomposable objects.  

The \emph{representation type} of an algebra is one of the following.
\begin{enumerate}
\item \emph{Representation-finite}: when $\mod\Lambda$ is of finite type.
\item \emph{Representation-tame}: when all but finitely many isomorphism classes of indecomposable modules of a given (finite) dimension belong to finitely many one-parametric families.
\item \emph{Representation-wild}: there are non-trivial two-parameter families of finite-dimensional indecomposable modules.
\end{enumerate}
For details on the definition, see the original script \cite{Dro} or the survey \cite{Sko}.

By \emph{representation-infinite} we mean that $\mod \Lambda$ is not representation-finite.  Clearly, being representation-finite implies representation-tame, so we will call a representation-infinite algebra of tame type \emph{infinite-tame} for convenience and for clarity.
By Drozd's dichotomoy theorem \cite{Dro}, tame and wild are mutually exclusive conditions.

Note that representation type of an algebra is left-right symmetric due to the duality $D:\mod \Lambda\xrightarrow{\sim} \mod\Lambda^\op$.
Clearly, as $\mod \Lambda/I$ and $\mod e\Lambda e$ embeds into $\mod \Lambda$ for any two-sided ideal $I$ and any idempotent $e$, an algebra with a representation-infinite quotient or with a representation-infinite idempotent subalgebra is also representation-infinite.

On the other hand, there is a special case when the representation-finiteness of a quotient implies that of the original algebra - this is given by the following Drozd-Kirichenko rejection lemma \cite{DK}.
\begin{lemma}[Rejection Lemma \cite{DK}]\label{lem:reject}
Let $\Lambda$ be an algebra and $P$ be an indecomposable projective-injective $\Lambda$-module.
Then the natural embedding $\mod \Lambda/\soc(P)\to \mod \Lambda$ induces a bijection between $\ind\Lambda/\soc(P)$ and $\ind\Lambda\setminus \{ P \}$.
In particular, $\Lambda/\soc(P)$ is representation-finite (resp. representation-tame) if and only if so is $\Lambda$.
\end{lemma}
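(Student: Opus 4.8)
The plan is to realise $\mod\bar\Lambda$, where $\bar\Lambda:=\Lambda/\soc(P)$, as a full subcategory of $\mod\Lambda$ and then pin down exactly which indecomposables it omits. First I would record that $\soc(P)$ is a two-sided ideal, so that $\bar\Lambda$ makes sense: writing $P=e\Lambda$ for a primitive idempotent $e$, the algebra being basic over an algebraically closed field forces $\soc(P)$ to be one-dimensional, say $\soc(P)=Kx$ with $x=ex$; the socle condition gives $x\cdot\rad\Lambda=0$, while the injectivity of $P$ ensures $\rad\Lambda\cdot x=0$ as well (equivalently, the socle generator is a maximal path on both sides), so that $\Lambda x=Kx=x\Lambda$. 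This two-sidedness is routine and should be dispatched first. Granting it, restriction of scalars along $\Lambda\twoheadrightarrow\bar\Lambda$ is a fully faithful exact functor identifying $\mod\bar\Lambda$ with the full subcategory $\{M\in\mod\Lambda : M\cdot\soc(P)=0\}$; since it preserves indecomposability and isomorphism classes, it induces an injection $\ind\bar\Lambda\hookrightarrow\ind\Lambda$ with image $\{M\in\ind\Lambda : M\cdot\soc(P)=0\}$. Thus the whole statement reduces to the claim that, for $M\in\ind\Lambda$, one has $M\cdot\soc(P)\neq 0$ if and only if $M\cong P$.

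The heart of the argument is the forward implication, and this is where both halves of the projective-injective hypothesis enter. Suppose $M\cdot\soc(P)\neq 0$, so there is $m\in M$ with $mx\neq 0$; replacing $m$ by $me$ (which still satisfies $(me)x=mx\neq0$) we may assume $m=me$, and then $\phi\colon P=e\Lambda\to M$, $e\lambda\mapsto m\lambda$, is a well-defined $\Lambda$-homomorphism with $\phi(x)=mx\neq 0$. Because $\soc(P)=Kx$ is simple, $\phi$ is injective on $\soc(P)$, hence $\Ker\phi\cap\soc(P)=0$; as the socle of a finite-dimensional module is essential, this forces $\Ker\phi=0$, so $\phi$ is a monomorphism. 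Now the \emph{injectivity} of $P$ makes this monomorphism split, whence $P$ is a direct summand of $M$, and indecomposability of $M$ gives $M\cong P$. The converse is immediate: $P\cdot\soc(P)\ni e\cdot x=x\neq0$, so $P$ itself fails the annihilation condition and is precisely the indecomposable that gets rejected.

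Combining the two implications gives $\{M\in\ind\Lambda : M\cdot\soc(P)=0\}=\ind\Lambda\setminus\{P\}$, and composing with the identification above yields the desired bijection $\ind\bar\Lambda\leftrightarrow\ind\Lambda\setminus\{P\}$. For the representation-type consequence, one direction is already covered by the general remark that a quotient of a representation-finite (resp.\ representation-tame) algebra is again of that type, applied to $\Lambda\twoheadrightarrow\bar\Lambda$. For the reverse direction it suffices to observe that the bijection changes $\ind$ by a single isomorphism class, namely $P$: finiteness of $\ind\bar\Lambda$ is therefore equivalent to finiteness of $\ind\Lambda$, and since adjoining one further indecomposable (of fixed dimension) cannot destroy the property that the remaining indecomposables lie in finitely many one-parameter families, representation-tameness transfers as well.

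I expect the main obstacle to be the step upgrading $\phi\colon P\to M$ to a split monomorphism: one must use projectivity of $P$ to produce a map hitting $\soc(P)$ nontrivially, essentiality of the simple socle to promote this to a monomorphism, and injectivity of $P$ to split it off. It is exactly this simultaneous use of both properties of $P$ that singles out $P$, and only $P$, as the rejected indecomposable; if $P$ were merely projective the map $\phi$ need not split, and if merely injective one could not construct $\phi$ in the first place.
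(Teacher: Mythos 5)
The paper does not prove this lemma at all --- it is imported verbatim from Drozd--Kirichenko \cite{DK} with only a citation --- so there is no in-text argument to compare yours against. Your proof is correct and is the standard one: identify $\mod\Lambda/\soc(P)$ with the full subcategory of $\Lambda$-modules annihilated by $\soc(P)$, and show that the only indecomposable failing the annihilation condition is $P$ itself, using projectivity to build the map $\phi\colon P\to M$, essentiality of the socle to make it injective, and injectivity of $P$ to split it. One small point of bookkeeping: the one-dimensionality of $\soc(P)$ is not forced by basicness alone; you first need $\soc(P)$ to be \emph{simple}, which follows from $P$ being an indecomposable injective (it is the injective envelope of its socle), and only then does basicness over an algebraically closed field give $\dim_K\soc(P)=1$. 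That simplicity is also what underpins the ``routine'' two-sidedness claim: if $e_iax\neq 0$ for some primitive idempotent $e_i$, left multiplication by $e_ia$ is injective on the simple essential socle of $e\Lambda$, hence a split (by injectivity) monomorphism $e\Lambda\to e_i\Lambda$, forcing $e_i\Lambda\cong e\Lambda$ and $ax\in\soc(e\Lambda)$; it is worth writing this out rather than waving at it, since it is the only place where both hypotheses on $P$ are used a second time. With that filled in, the bijection and the transfer of representation type follow exactly as you say.
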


A useful tool in determining representation-infiniteness is to use the separated quiver.

\begin{definition}
For a quiver $Q$, its associated \emph{separated quiver} is the quiver $Q^s$ defined as follows.
Suppose $Q_0 = \{1,2,\ldots, n\}$.  Then \[
Q^s_0:=\{1, 2, \ldots, n, 1', 2', \ldots, n'\}.
\]
Every arrow of $Q^s$ is of the form
$\xymatrix{i \ar[r] & j'}$
whenever there is an arrow $\xymatrix{i \ar[r] & j}$ in $Q$.

For an algebra $\Lambda$, its \emph{separated quiver} is the separated quiver associated to the $\Ext^1$-quiver of $\Lambda$.
\end{definition}

Here is a well-known result.

\begin{proposition}{\rm\cite[X. Theorem 2.6]{ARS}}\label{prop:RFRad}
An algebra whose radical squares to zero is representation-finite if and only if its separated quiver is a finite disjoint union of Dynkin quivers.
In particular, if the separated quiver of an algebra contains a connected component that is non-Dynkin, then the algebra is representation-infinite.
\end{proposition}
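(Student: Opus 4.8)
The plan is to compare $\Lambda$ with the path algebra of its separated quiver and then invoke Gabriel's theorem. Write $r=\rad\Lambda$ and $S=\Lambda/r$; since $\rad^2\Lambda=0$, the radical $r$ is a semisimple $S$-$S$-bimodule, and the triangular matrix algebra $\Gamma:=\Tria({}_S r_S)$ from the construction of Example~\ref{eg:Morita-context} has ordinary quiver the separated quiver $Q^s$. Because $Q^s$ is bipartite, hence acyclic, $\Gamma\cong KQ^s$ is a finite-dimensional hereditary algebra and $\mod\Gamma$ is identified with the category of representations of $Q^s$. Once $\Lambda$ and $\Gamma$ are shown to have the same representation type, Gabriel's theorem finishes the argument: the acyclic path algebra $KQ^s$ is representation-finite if and only if each connected component of $Q^s$ is a simply-laced Dynkin quiver, and the \emph{in particular} clause is then just the contrapositive of the forward implication.

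To relate the two representation types I would use the \emph{separation functor}. For a right $\Lambda$-module $M$ one has $\rad M=Mr$, and since $r^2=0$ the submodule $Mr$ is annihilated by $r$, so $\rad M\subseteq\soc M$ and both $M/Mr$ and $Mr$ are semisimple. Right multiplication by an arrow $\alpha\in e_i r e_j$ carries $M$ into $Mr$ and kills $Mr$, hence factors through a $K$-linear map $e_i(M/Mr)\to e_j(Mr)$; assembling these maps over all arrows of the $\Ext^1$-quiver produces a representation of $Q^s$. This assignment $M\mapsto (M/Mr,\,Mr,\,\bar\mu)$ is functorial and yields $F\colon\mod\Lambda\to\mod\Gamma$.

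The technical heart, and the step I expect to be the main obstacle, is to show that $F$ transfers representation-finiteness in both directions. Concretely, one must verify that $F$ reflects indecomposability and that, outside the finitely many indecomposable projective, injective and simple modules, it induces a bijection between $\ind\Lambda$ and $\ind\Gamma$: every indecomposable representation of $Q^s$ is realised as $F(M)$ for a suitable indecomposable $M$ with the prescribed top and socle, while non-isomorphic indecomposables keep non-isomorphic images. The difficulty is that $F$ is neither full nor faithful, so its behaviour on projective and injective modules and its failure to be dense must be controlled by hand; this is precisely the content of \cite[X.2]{ARS}. Equivalently, this step may be packaged as the classical fact that a radical-square-zero algebra is stably equivalent to the hereditary algebra of its separated quiver, so that $\Lambda$ and $\Gamma$ have the same number of non-projective indecomposables and hence the same representation type. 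Combining this correspondence with Gabriel's theorem, as in the first paragraph, gives the stated equivalence.
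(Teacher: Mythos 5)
The paper does not prove this statement: it is quoted verbatim from \cite[X.~Theorem~2.6]{ARS} and used as a black box. Your sketch --- passing to the hereditary triangular matrix algebra $\left(\begin{smallmatrix}\Lambda/r & 0\\ r & \Lambda/r\end{smallmatrix}\right)$ whose quiver is $Q^s$, invoking the stable equivalence of a radical-square-zero algebra with this hereditary algebra, and finishing with Gabriel's theorem --- is exactly the argument of the cited source, and you correctly flag that the stable-equivalence step is where all the work lies.
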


\begin{example}\label{Exrad2}
Consider quiver of the form $\xymatrix{\ar@(ul,dl)_{}1\ar@(ur,dr)^{}}$.
Then its separated quiver is the Kronecker quiver $\xymatrix{1 \ar@<0.5ex>[r] \ar@<-0.5ex>[r]&1'}$. Hence, if the $\Ext^1$-quiver of an algebra has two loops attached to a vertex, then the algebra is representation-infinite. 
\end{example}

\subsection{Representation-finite symmetric algebras} \label{subsec:RFS}
We start by recalling two fundamental results (Theorem \ref{tricot}, Theorem \ref{thm:RFSym}), which are consequences of a long list of works including \cite{G,GR,Rie,BLR,HW,Sko} and many more.
We also refer to the survey \cite[Section 3, 4]{Sko} for any unexplained definitions, such as iterated tilted algebras, Brauer tree algebras, and modified Brauer tree algebra; we will only use certain special cases of these algebras.

The first result concerns the trichotomy of trivial extension algebras of acyclic path algebras.

\begin{theorem}\label{tricot}
Let $\Delta$ be a finite acyclic quiver.
The following are equivalent.
\begin{enumerate}[\rm (i)]
\item $\Delta$ is a (simply-laced) Dynkin (resp. Euclidean, resp. wild) quiver;
\item $K\Delta$ is representation-finite (resp. of infinite-tame type, resp. of wild type);
\item $\Triv(K\Delta)$ is representation-finite (resp. of infinite-tame type, resp. of wild type).
\end{enumerate}
\end{theorem}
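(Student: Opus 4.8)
The plan is to treat the two equivalences separately: (i) $\Leftrightarrow$ (ii) is the classical classification of the representation type of hereditary algebras, whereas (ii) $\Leftrightarrow$ (iii) is obtained by comparing $K\Delta$ with its trivial extension through the repetitive algebra. For (i) $\Leftrightarrow$ (ii) I would simply invoke the well-known results collected in \cite{G, Sko}: Gabriel's theorem identifies the representation-finite path algebras with the simply-laced Dynkin quivers, the theorem of Donovan--Freislich and Nazarova identifies the infinite-tame ones with the Euclidean quivers, and Drozd's dichotomy then forces the remaining (wild) quivers to yield wild path algebras.

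For (ii) $\Leftrightarrow$ (iii), the key is the repetitive algebra $\widehat{K\Delta}$, which sits between $K\Delta$ and $\Triv(K\Delta)$ in two complementary ways. On one side, because $K\Delta$ is hereditary, Happel's theorem supplies a triangle equivalence $\Db(\mod K\Delta)\simeq\stmod\widehat{K\Delta}$; since the indecomposables of the derived category of a hereditary algebra are exactly the shifts $X[n]$ of indecomposable modules, the $\Z$-orbits of indecomposables over $\widehat{K\Delta}$ correspond bijectively to $\ind K\Delta$, so $\widehat{K\Delta}$ carries the same representation type as $K\Delta$. On the other side, $\Triv(K\Delta)$ is the orbit algebra of $\widehat{K\Delta}$ under the period-one shift automorphism, so the quotient functor $\widehat{K\Delta}\to\Triv(K\Delta)$ is a Galois covering with free $\Z$-action, and covering theory then transfers the representation type between the covering and its orbit algebra. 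Composing the two comparisons yields that $\Triv(K\Delta)$ has the same representation type as $K\Delta$, which is precisely (ii) $\Leftrightarrow$ (iii).

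Several implications can in fact be shortcut. Since the canonical surjection $\Triv(K\Delta)\twoheadrightarrow K\Delta$ makes $\mod K\Delta$ a full subcategory of $\mod\Triv(K\Delta)$, representation-infiniteness and wildness of $K\Delta$ pass immediately to $\Triv(K\Delta)$; thus only the implications ``$\Delta$ Dynkin $\Rightarrow\Triv(K\Delta)$ representation-finite'' and ``$\Delta$ Euclidean $\Rightarrow\Triv(K\Delta)$ infinite-tame'' genuinely require the covering argument, and the first of these may alternatively be read off from the Hughes--Waschb\"usch theorem \cite{HW}. The main obstacle is the bookkeeping in the covering step: one is comparing a genuinely infinite-dimensional locally bounded category with a finite-dimensional algebra, and one must invoke the correct push-down statement guaranteeing that each of the three representation types is preserved under the free $\Z$-action. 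The tame case is the most delicate, as one has to ensure that no new one- or two-parameter families of indecomposables are created or destroyed when passing between $\widehat{K\Delta}$ and $\Triv(K\Delta)$; this is where I would lean most heavily on the covering-theoretic literature \cite{G, GR, Rie} and the survey \cite{Sko}.
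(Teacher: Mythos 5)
The paper does not prove this theorem: it is recalled as a known consequence of the literature it cites ([G, GR, Rie, BLR, HW, Sko]), and your sketch — Gabriel/Donovan--Freislich--Nazarova/Drozd for (i)$\Leftrightarrow$(ii), then Happel's equivalence $\Db(\mod K\Delta)\simeq\stmod\widehat{K\Delta}$ combined with the Galois covering $\widehat{K\Delta}\to\Triv(K\Delta)$ and the push-down results of Gabriel and Dowbor--Skowro\'nski for (ii)$\Leftrightarrow$(iii) — is exactly the standard argument those citations encode. Your proposal is correct and consistent with the paper's intended provenance, including the correct identification of the delicate point (density and tameness-preservation of the push-down, which for the Euclidean case rests on local support-finiteness of the repetitive algebra).
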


We also recall the classification of representation-finite symmetric algebras for completeness.

\begin{theorem}\label{thm:RFSym}
A non-simple basic symmetric algebra is representation-finite if and only if it is isomorphic to one of the following mutually exclusive classes of algebras.
\begin{enumerate}[\rm (i)]
\item Trivial extension of an iterated tilted algebra of Dynkin type.
\item Brauer tree algebra with exceptional multiplicity at least 2.
\item Modified Brauer tree algebra. If the char$K=2$, then this further splits into two classes, namely of parameter $\epsilon=0$ (standard type) and $\epsilon=1$ (non-standard type).
\end{enumerate}
\end{theorem}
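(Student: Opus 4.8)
The plan is to treat Theorem~\ref{thm:RFSym} as the assembly of two implications, recognising that it is a deep recalled result whose full proof draws on the structure theory of stable Auslander--Reiten quivers; I would therefore organise the argument around that structure theory rather than re-derive each family by hand. First I would dispose of the easy direction.

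\emph{Sufficiency.} To see that each of the three families is representation-finite, I would reduce to a distinguished representative in each case, using the fact (due to Rickard) that derived-equivalent self-injective algebras are stably equivalent, together with the fact that a stable equivalence preserves the number of non-projective indecomposables. For (i), an iterated tilted algebra of type $\Delta$ is by definition derived equivalent to $K\Delta$; since applying $\Triv(-)$ to a derived equivalence yields a derived equivalence of the (symmetric, hence self-injective) trivial extensions, any algebra in (i) is stably equivalent to $\Triv(K\Delta)$, which is representation-finite by Theorem~\ref{tricot}. For (ii), all Brauer tree algebras with a fixed number of edges and fixed exceptional multiplicity are derived equivalent to the corresponding Brauer star, which is a symmetric Nakayama algebra and hence patently representation-finite; the modified Brauer tree algebras in (iii) reduce similarly to a distinguished $\mathbb{D}$-type representative.

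\emph{Necessity.} The substance is the converse: a connected, basic, non-simple, symmetric, representation-finite algebra $\Lambda$ must lie in one of the three families. I would proceed in three steps. (a) By Riedtmann's structure theorem \cite{Rie}, the stable Auslander--Reiten quiver of $\Lambda$ has the form $\Z\Delta/G$ for a uniquely determined Dynkin tree class $\Delta$ and an admissible (infinite cyclic) automorphism group $G$. (b) Passing to a simply connected Galois cover via Bongartz--Gabriel covering theory, the \emph{standard} case is governed by the Hughes--Waschb\"usch description \cite{HW}: $\Lambda\cong\hat{B}/\langle\varphi\rangle$, where $\hat{B}$ is the repetitive algebra of an iterated tilted algebra $B$ of type $\Delta$ and $\varphi$ is an admissible automorphism; the \emph{non-standard} algebras occur only when $\chr K=2$ and are exactly the $\epsilon=1$ representatives recorded in (iii) \cite{BLR}. (c) It then remains to impose the symmetry of $\Lambda$ on the orbit-algebra datum and to read off which pairs $(\Delta,\varphi)$ survive, matching them against the parametrisation of \cite{Asa}.

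\emph{Main obstacle.} I expect the real work to lie in step (c): distinguishing symmetric orbit algebras from merely self-injective (or weakly symmetric) ones. When $\varphi$ is the Nakayama shift $\nu_{\hat{B}}$ one recovers $\Lambda\cong\Triv(B)$, which is always symmetric and yields family (i). When $\varphi$ is a proper root of the shift the orbit shrinks, and a case analysis on the tree class shows that the symmetry constraint is met only for $\Delta=\mathbb{A}$, producing Brauer tree algebras (family (ii)), and for $\Delta=\mathbb{D}$, producing modified Brauer tree algebras (family (iii)); the exceptional multiplicity and the standard/non-standard split in characteristic $2$ emerge from the order of $\varphi$ and from the non-standard covers. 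Finally I would verify that the three families are mutually exclusive, the most delicate point being to confirm that Brauer tree algebras of exceptional multiplicity $1$ are absorbed into family (i) rather than (ii), so that no algebra is counted twice.
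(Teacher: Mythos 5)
The paper does not actually prove Theorem~\ref{thm:RFSym}: it is stated as a recalled classification, attributed to the combined literature \cite{G,GR,Rie,BLR,HW,Sko,W}, so there is no in-paper argument to compare against. Your outline is a faithful reconstruction of how that classification is established in those references, and the overall architecture is sound: sufficiency by reduction to distinguished representatives under derived (hence, by Rickard, stable) equivalence, and necessity via Riedtmann's tree-class theorem, covering theory, and the Hughes--Waschb\"usch orbit-algebra presentation $\hat{B}/\langle\varphi\rangle$, with the symmetry constraint singling out $\varphi=\nu_{\hat B}$ (family (i)) and its proper roots in types $\mathbb{A}$ and $\mathbb{D}_{3n}$ (families (ii) and (iii)); your closing remark that multiplicity-one Brauer tree algebras must be absorbed into family (i) is exactly the point that makes the three classes mutually exclusive, consistent with the paper's Definition~\ref{def:RFSy}. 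Two places where your sketch leans on more than it states: first, representation-finiteness of the modified Brauer star representative in (iii) is not a formal consequence of reduction --- it rests on Waschb\"usch's explicit computation of its Auslander--Reiten quiver in \cite{W}; second, in step (c) the exclusion of tree classes $\mathbb{E}_6,\mathbb{E}_7,\mathbb{E}_8$ and of $\mathbb{D}_n$ with $3\nmid n$ for proper roots of the shift requires checking which admissible automorphisms are compatible with the Nakayama permutation being trivial (symmetric, not merely weakly symmetric), which is the genuinely delicate computation in \cite{Asa} and \cite{W}. Neither point is an error, but both would need to be filled in for a complete proof rather than the citation the paper itself opts for.
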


We note that a modified Brauer tree algebra with parameter $\epsilon=1$ in the case when $\text{char} K\neq 2$ is isomorphic to one with parameter $\epsilon=0$; see \cite{W}.  We will give the definition of a special case in Section \ref{sec:modBTree}.

As is the case of the investigation carried out in this article, properties of representation-finite symmetric algebras can often be studied up to derived equivalence or stable equivalence, i.e. triangulated equivalences of the (bounded) derived categories $\Db(\mod\Lambda)$ or stable module categories $\stmod\Lambda$, respectively.  We refer to \cite{H} if one needs detail explanation on these terminologies.

We first recall a result of Rickard \cite{Ric} that relates derived and stable equivalences between self-injective algebras.
Suppose that $\Lambda$ is self-injective.  Then any derived equivalence $F:\Db(\mod \Lambda)\to \Db(\mod \Gamma)$ induce a stable equivalence $\overline{F}:\stmod\Lambda\to \stmod \Gamma$ and there is a commutative diagram
\[
\xymatrix{
\Db(\mod \Lambda) \ar[r]^{F}\ar[d]_{\pi_\Lambda} & \Db(\mod \Gamma)\ar[d]^{\pi_\Gamma} \\ \stmod \Lambda \ar[r]^{\overline{F}} & \stmod \Gamma,}
\]
where $\pi_\Lambda$ is the canonical functor $\Db(\mod \Lambda) \to \Db(\mod\Lambda)/\mathsf{K}^{\mathrm{b}}(\proj\Lambda) \simeq \stmod{\Lambda}$.

From now on, we will focus only on the symmetric algebras.
As mentioned in the introduction, derived equivalences of representation-finite symmetric algebras are (almost) classified by the RFSy-type $(\Delta,f)$; it tells us the shape of its stable Ausalnder-Reiten quiver (or \emph{stable AR-quiver} for short).
Namely, $A$ is of RFSy-type if its stable AR-quiver takes the form of the translation quiver $\mathbb{Z}\Delta/\langle \tau^{m_\Delta f}\rangle$, where $m_\Delta:=h_\Delta-1$ and $h_\Delta$ is the Coxeter number of type $\Delta$.

It will be convenience to pick the derived equivalence class representative for each RFSy-type.

\begin{theorem}{\rm\cite{Asa}}\label{thm:asa}
Let $B$ be a representation-finite symmetric algebra of RFSy-type $(\Delta,f)$.
\begin{enumerate}[\rm (i)]
\item If $f=1$, then $B$ is derived equivalent to $\Triv(k\vec{\Delta})$ for any orientation $\vec{\Delta}$ on $\Delta$.

\item If $(\Delta,f)=(\mathbb{A}_{mn},1/m)$ for some $n\geq 1$ and some $m>1$, then $B$ is derived equivalent to a symmetric Nakayama algebra with $n$ simples and Loewy length $mn+1$ (a.k.a. Brauer star algebra with $n$ simples multiplicity $m$).

\item If $(\Delta,f)=(\mathbb{D}_{3n}, 1/3)$ for some $n\geq 2$, then $B$ is derived equivalent to a modified Brauer star algebra with $n$ simples for some $n\geq 2$ (see Definition \ref{def:modBSt}).  If $\mathrm{char} K\neq 2$, then this uniquely determine a derived equivalence class; otherwise, there are two derived equivalence classes, distinguished by a parameter $\epsilon\in\{0,1\}$:
\begin{itemize}
\item $\epsilon=0$: standard modified Brauer tree algebra.
\item $\epsilon=1$: non-standard modified Brauer tree algebra. 
\end{itemize} 
\end{enumerate}
\end{theorem}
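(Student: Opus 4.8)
The plan is to reduce the whole statement to Asashiba's complete derived invariant for representation-finite self-injective algebras, which asserts that two such algebras are derived equivalent precisely when they share the same \emph{derived equivalence type} --- a datum equivalent to the RFSy-type $(\Delta,f)$, refined in characteristic $2$ by the parameter $\epsilon$ separating the standard and non-standard modified Brauer tree algebras. Granting this classification, it suffices to verify that each algebra proposed in (i)--(iii) is symmetric and carries exactly the RFSy-type of $B$; the asserted derived equivalence is then automatic. The type of a given algebra is read off from the shape of its stable Auslander--Reiten quiver, and by the Rickard correspondence recalled above \cite{Ric} (a derived equivalence of self-injective algebras induces a stable equivalence compatible with the canonical projections $\pi_\Lambda,\pi_\Gamma$) this shape is a derived invariant, so matching it with $B$ is exactly what is needed.

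For (i), I would first note that $\Triv(k\vec\Delta)$ is symmetric for \emph{every} orientation, since every trivial extension is symmetric. By Theorem \ref{tricot} it is representation-finite exactly when $\Delta$ is Dynkin, and its stable AR-quiver has the form $\Z\Delta/\langle\tau^{m_\Delta}\rangle$, i.e. frequency $f=1$; hence its RFSy-type is $(\Delta,1)$, as required. The only remaining point is orientation-independence: for two orientations $\vec\Delta,\vec\Delta'$ of the same Dynkin graph, the hereditary algebras $k\vec\Delta$ and $k\vec\Delta'$ are derived equivalent via reflection (BGP) tilting, and passing to trivial extensions sends a derived equivalence between algebras of finite global dimension to one between their trivial extensions (through the stable category of the associated repetitive algebra). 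Thus $\Triv(k\vec\Delta)$ and $\Triv(k\vec\Delta')$ are derived equivalent and the choice of orientation is immaterial.

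For (ii) and (iii), the proposed representatives are explicit symmetric algebras whose module categories are classically understood, so I would simply compute their types directly. A symmetric Nakayama algebra with $n$ simples and Loewy length $mn+1$ is the Brauer star algebra on a star with $n$ edges and exceptional multiplicity $m$, and an AR-quiver computation identifies its type as $(\mathbb{A}_{mn},1/m)$; similarly the modified Brauer star algebra with $n$ simples has type $(\mathbb{D}_{3n},1/3)$. In characteristic $2$ the modified Brauer star occurs in two socle-deformed forms that are genuinely non-derived-equivalent, which accounts for $\epsilon\in\{0,1\}$, whereas in every other characteristic the non-standard form coincides with the standard one. Matching these types against that of $B$ yields the claimed derived equivalences.

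The hard part will be the completeness half of the invariant, namely that sharing a type \emph{forces} a derived equivalence; this is the deep content of Asashiba's work and rests on an explicit description of the tilting complexes (equivalently, the relevant derived Picard data) realising all self-injective algebras of a fixed type from a standard model, together with the separate and delicate identification of the non-standard modified Brauer tree algebra in characteristic $2$. Reproving this from scratch is out of scope, so I would cite it as the engine of the argument; the genuine work on my side is only the type computations above, which pin down the convenient representatives named in the statement.
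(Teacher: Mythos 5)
This theorem is quoted in the paper directly from Asashiba's classification \cite{Asa} with no independent proof supplied, and your proposal does exactly what that citation implicitly encodes: invoke the completeness of the RFSy-type (refined by $\epsilon$ in characteristic $2$) as a derived invariant for representation-finite symmetric algebras, and then verify that each named representative is symmetric of the indicated type. Your route is therefore essentially the same as the paper's, and the auxiliary checks you sketch (symmetry of trivial extensions, orientation-independence via the passage of derived equivalences to trivial extensions, and the stable AR-quiver computations identifying the types of the Brauer star and modified Brauer star algebras) are all correct.
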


\subsection{Almost $\nu$-stable derived equivalences}\label{subsec:derived}

For an algebra $\Lambda$, we denote by $\nu$ the Nakayama functor of $\Lambda$, i.e. $\nu := -\otimes_\Lambda D\Lambda:\add\Lambda \to \add D\Lambda$.
We recall in the following the notion of almost $\nu$-stable derived equivalences introduced in \cite{HX}.  Although its definition is somewhat cumbersome and possibly unenlightening, we will see in Proposition \ref{prop:nuder} the central role it plays in our investigation.

\begin{definition}
For a tilting complex $T$ concentrated in non-positive degrees, say $T=(T^{-n} \to T^{-n+1} \to \cdots \to T^1\to T^0)$, it is said to be \emph{negatively $\nu$-stable} if $\add(\bigoplus_{i=1}^n T^{-i}) = \add(\bigoplus_{i=1}^{n}\nu(T^{-i}))$.  Dually we define \emph{positively $\nu$-stable} for tilting complexes concentrated in non-negative degrees.

Suppose $F:\Db(\mod\Lambda)\to\Db(\mod\Gamma)$ is a derived equivalence, and $F^{-1}$ is its quasi-inverse.
Up to applying sufficiently many shifts or swapping $F$ with $F^{-1}$, we say that $F$ (and also $F^{-1}$) is \emph{almost $\nu$-stable} if $F(\Lambda)$ and $F^{-1}(\Gamma)$ are positively $\nu$-stable and negatively $\nu$-stable respectively.
\end{definition}

Note that our formulation differs slightly from \cite{HX}; this is soley for the purpose of shortening the conditions and there is no difference from the original definition.
For us, the most important aspect can be summarised in the following slogan: `almost $\nu$-stable derived equivalences of gendo-symmetric algebras behave in the same way as derived equivalences of symmetric algebras'.
This can be justified by the following result.

\begin{proposition}\label{prop:nusta}
Let $F:\Db(\mod\Lambda)\to \Db(\mod\Gamma)$ be an almost $\nu$-stable derived equivalence. Then the following hold.
\begin{enumerate}
\item {\rm \cite[Theorem 3.7]{HX}} $F$ induces a stable equivalence between $\overline{F}:\stmod\Lambda\to \stmod\Gamma$.
\item {\rm \cite[Proposition 4.1, Corollary 5.4]{HX}} $\Lambda$ and $\Gamma$ have the same global, finitistic, representation, and dominant dimensions.
\item {\rm \cite[Proposition 6.1]{HX}} There is an induced almost $\nu$-stable derived equivalence between $\End_{\Lambda}(\Lambda\oplus M)$ and $\End_{\Gamma}(\Gamma\oplus \overline{F}(M))$, where $\overline{F}$ is the induced stable equivalent in (i).
\item {\rm \cite{Ric2}} $\Lambda$ is symmetric if and only if so is $\Gamma$.
\item {\rm \cite[Theorem 4.6]{CM}} $\Lambda$ is gendo-symmetric if and only if so is $\Gamma$.
\end{enumerate}
\end{proposition}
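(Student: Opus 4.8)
The plan is to observe that every one of the five assertions is already available in the cited literature, so that the genuine content of a proof consists only in checking that the streamlined definition of almost $\nu$-stable derived equivalence recorded above coincides with the original one of Hu and Xi \cite{HX}. Once this compatibility is in place, each item follows by direct quotation.

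First I would unwind and compare the two definitions. The original notion in \cite{HX} asks that the non-zero-degree terms of the tilting complexes $F(\Lambda)$ and $F^{-1}(\Gamma)$ be stable under the Nakayama functor $\nu$ in a suitable sense. After the permitted normalisation --- shifting so that the relevant complexes are concentrated in non-negative (resp.\ non-positive) degrees, and possibly swapping $F$ with $F^{-1}$ --- requiring $F(\Lambda)$ to be positively $\nu$-stable and $F^{-1}(\Gamma)$ to be negatively $\nu$-stable reproduces exactly the conditions $\add(\bigoplus_{i\geq 1} T^{i}) = \add(\bigoplus_{i\geq 1}\nu(T^{i}))$ (and its dual) that encode the original hypothesis. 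This is a purely bookkeeping verification, and is precisely the content of the remark that our formulation differs from \cite{HX} only cosmetically.

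With the two definitions identified, the remaining steps are citations: item (1) is \cite[Theorem 3.7]{HX}; item (2) is \cite[Proposition 4.1, Corollary 5.4]{HX}; item (3) is \cite[Proposition 6.1]{HX}; and item (5) is \cite[Theorem 4.6]{CM}. Item (4) is somewhat different in character, in that preservation of symmetricity under \emph{any} derived equivalence is a general theorem of Rickard \cite{Ric2} and requires no $\nu$-stability hypothesis at all. The main --- and in effect the only --- obstacle is the definitional reconciliation of the first step: one must take care that the convenient reformulation neither strengthens nor weakens the hypotheses of the quoted theorems, since otherwise the citations would not literally apply. Everything downstream of that check is a matter of correctly transcribing the cited results.
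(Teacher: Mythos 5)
Your proposal matches the paper exactly: the paper gives no proof of this proposition beyond the inline citations, and the only substantive point — that the streamlined definition of almost $\nu$-stable derived equivalence agrees with the original one in \cite{HX} — is handled there by the remark immediately preceding the definition. Your reconciliation of the two formulations and the subsequent direct quotations are precisely what the paper intends.
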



As a consequence, we have the following result, which will be used frequently in the sequel.

\begin{proposition}\label{prop:nuder}
Suppose $A$ and $B$ are symmetric algebras and $M$ is a $A$-module.
If $A$ is representation-finite and $\Phi:\stmod{A}\to \stmod{B}$ is a stable equivalence, then $\End_{A}(A\oplus M)$ is representation-finite if and only if so is $\End_{B}(B\oplus \Phi(M))$.
\end{proposition}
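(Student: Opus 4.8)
The plan is to reduce the statement to a direct application of Proposition \ref{prop:nusta}, the only genuine work being to realise the given stable equivalence $\Phi$ as the stabilisation of a derived equivalence. First I would record two simplifications that are available because $A$ and $B$ are symmetric. On one hand, $D\Lambda\cong\Lambda$ as bimodules for any symmetric algebra $\Lambda$, so the Nakayama functor $\nu=-\otimes_\Lambda D\Lambda$ restricts to the identity on $\add\Lambda$; since the terms of a tilting complex lie in $\add$ of the algebra, \emph{every} tilting complex over a symmetric algebra is both positively and negatively $\nu$-stable, and hence every derived equivalence between symmetric algebras is automatically almost $\nu$-stable. On the other hand, a stable equivalence is an equivalence of categories, so $\Phi$ gives a bijection between the non-projective indecomposables of $A$ and those of $B$; as any finite-dimensional algebra has only finitely many indecomposable projectives, $B$ is again representation-finite.

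The key step, and the main obstacle, is to produce a derived equivalence $F\colon\Db(\mod A)\to\Db(\mod B)$ whose induced stable equivalence $\overline{F}$ agrees with $\Phi$, at least on the object $M$ (i.e.\ $\overline{F}(M)\cong\Phi(M)$ in $\stmod B$). Existence of \emph{some} derived equivalence between $A$ and $B$ is not hard: being stably equivalent, $A$ and $B$ have the same shape of stable AR-quiver, hence the same RFSy-type, and by Theorem \ref{thm:asa} both are derived equivalent to the standard representative of that type. The difficulty is that an arbitrary stable equivalence need not \emph{a priori} be the stabilisation of a derived equivalence, so $\overline{F}$ and $\Phi$ may differ by a stable autoequivalence of $A$. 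Resolving this is exactly where representation-finiteness of $A$ is essential: I would invoke the lifting theory for representation-finite self-injective algebras, to the effect that every stable equivalence between such algebras is induced by a derived equivalence, and use it either to upgrade the chosen $F$ or to compose it with a lift of the discrepancy autoequivalence $\overline{F}^{-1}\circ\Phi$, so that $\overline{F}\simeq\Phi$. By the first paragraph this $F$ is then automatically almost $\nu$-stable.

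With such an $F$ in hand the remainder is formal. By Proposition \ref{prop:nusta}(iii) there is an almost $\nu$-stable derived equivalence between $\End_A(A\oplus M)$ and $\End_B(B\oplus\overline{F}(M))$. Since representation-finiteness of $\End_B(B\oplus N)$ depends only on the non-projective part of $N$ (adjoining projective summands to $N$ leaves $\add(B\oplus N)$, and hence the Morita class of the endomorphism algebra, unchanged), and since $\overline{F}(M)\cong\Phi(M)$ in $\stmod B$, we may replace $\overline{F}(M)$ by $\Phi(M)$ throughout. Finally, Proposition \ref{prop:nusta}(i) turns this almost $\nu$-stable derived equivalence into a stable equivalence $\stmod\End_A(A\oplus M)\to\stmod\End_B(B\oplus\Phi(M))$, and the bijection-on-non-projectives argument of the first paragraph shows that one of these endomorphism algebras is representation-finite exactly when the other is, which is the claim.
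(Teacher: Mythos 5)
Your proposal is correct and follows essentially the same route as the paper: lift $\Phi$ to a derived equivalence using the lifting theory for representation-finite self-injective algebras (Asashiba, and Dugas for the non-standard $(\mathbb{D}_{3m},1/3)$ case), observe that any derived equivalence between symmetric algebras is automatically almost $\nu$-stable, and apply Proposition \ref{prop:nusta}(3). The only (harmless) divergence is in the final step, where the paper deduces the conclusion from invariance of the representation dimension (Proposition \ref{prop:nusta}(2) plus Auslander's characterisation of representation-finiteness as representation dimension $\le 2$), whereas you use the induced stable equivalence from Proposition \ref{prop:nusta}(1) together with the bijection on non-projective indecomposables; both are valid.
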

\begin{proof}
First recall from \cite{Asa2} (for the case of RFSy-type $(\Delta,1)$ and $(\mathbb{A}_{mn},1/m)$) and Dugas \cite{Dug} (for the case of RFSy-type $(\mathbb{D}_{3m},1/3)$) that $\Phi$ can always be lifted to a derived equivalence $F$, i.e. $\Phi=\overline{F}$ in the notation of Proposition \ref{prop:nusta} (1).

Since these algebras are symmetric, any derived equivalence between them is necessarily (almost) $\nu$-stable.  Hence, it follows from Proposition \ref{prop:nusta} (3) that 
$\End_{A}(A\oplus M)$ and $\End_{B}(B\oplus \Phi(M))$ are almost $\nu$-stable derived equivalence.
In particular, it follows from Proposition \ref{prop:nusta} (2) that they have the same representation dimension.
Since the representation dimension of an algebra is $2$ if and only if it is representation-finite \cite{Aus2}, the claim follows.
\end{proof}

The crucial point in the proof above is the preservation of representation dimension.  This argument also allows us to obtain a general form of the implication from (3) to (1) of the main Theorem \ref{thm:main}.

\begin{proposition}\label{prop:main(3)=>(1)}
Suppose $B$ is a symmetric algebra and $M\in \mod B$ is a generator.
If there is a representation-finite symmetric algebra $\Lambda$ and an ideal $I$ of $\Lambda$ such that $\End_B(M)$ is almost $\nu$-stable derived equivalent to $\Lambda/I$, then $\End_B(M)$ is representation-finite.
\end{proposition}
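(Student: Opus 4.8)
The plan is to reduce the claim to a statement about representation dimension, following the same strategy as the proof of Proposition~\ref{prop:nuder}. The point is that representation-finiteness is detected by the representation dimension being (at most) $2$, by Auslander's theorem \cite{Aus2}, and that this invariant is preserved under almost $\nu$-stable derived equivalence by Proposition~\ref{prop:nusta}(2). The only new ingredient compared to Proposition~\ref{prop:nuder} is that we must first control the representation type of the quotient algebra $\Lambda/I$, which is in general neither symmetric nor gendo-symmetric.

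First I would observe that $\Lambda/I$ is representation-finite. Indeed, the canonical surjection $\Lambda\twoheadrightarrow\Lambda/I$ induces a fully faithful embedding $\mod\Lambda/I\hookrightarrow\mod\Lambda$ (as already noted in Section~\ref{sec:prelim}), and since $\End_{\Lambda/I}(X)=\End_\Lambda(X)$ for every $\Lambda/I$-module $X$, this embedding sends distinct indecomposables to distinct indecomposables. Thus every indecomposable $\Lambda/I$-module is in particular an indecomposable $\Lambda$-module, and as $\Lambda$ is representation-finite by hypothesis there can be only finitely many of these. Consequently $\Lambda/I$ is representation-finite, so by Auslander's theorem its representation dimension is at most $2$. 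Now, by hypothesis $\End_B(M)$ is almost $\nu$-stable derived equivalent to $\Lambda/I$, so Proposition~\ref{prop:nusta}(2) gives that $\End_B(M)$ and $\Lambda/I$ have equal representation dimension. Hence the representation dimension of $\End_B(M)$ is also at most $2$, and applying Auslander's theorem in the reverse direction yields that $\End_B(M)$ is representation-finite, as desired.

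I do not expect a serious obstacle, since the argument is largely an assembly of facts already recorded in Section~\ref{sec:prelim}, and is a mild generalisation of the proof of Proposition~\ref{prop:nuder}. The one point deserving care is the degenerate situation: the clean equivalence ``representation-finite $\Leftrightarrow$ representation dimension $\le 2$'' subsumes the semisimple case uniformly, but if one instead uses the sharper form ``representation dimension $=2$ exactly for non-semisimple representation-finite algebras'', then one should separately remark that a semisimple $\Lambda/I$ forces $\End_B(M)$ to be semisimple, hence representation-finite, as well. Finally, note that the generator hypothesis on $M$ is not actually invoked in this argument; it serves only to guarantee that $\End_B(M)$ is gendo-symmetric, so that the proposition records the general form of the implication $(3)\Rightarrow(1)$ of Theorem~\ref{thm:main}.
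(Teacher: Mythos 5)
Your proof is correct and follows essentially the same route as the paper: representation-finiteness of $\Lambda$ passes to $\Lambda/I$, Auslander's theorem converts this to a statement about representation dimension, and Proposition \ref{prop:nusta}(2) transports that invariant across the almost $\nu$-stable derived equivalence. Your extra remarks on the faithfulness of the embedding $\mod\Lambda/I\hookrightarrow\mod\Lambda$ and on the semisimple degenerate case are sound elaborations of details the paper leaves implicit.
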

\begin{proof}
Since that representation-finiteness of $\Lambda$ implies that of $\Lambda /I$, it follows from Auslander's result \cite{Aus2} that the representation dimension of $\Lambda/I$ is $2$.  Hence, by Proposition \ref{prop:nusta} (2), the representation dimension of $\End_B(M)$ is also $2$, which in turn means that $\End_B(M)$ is also representation-finite.
\end{proof}

We expect that the converse holds when $I$ is a nilpotent ideal such that $eI=0$ where $e\in \Lambda$ is the idempotent such that $e(\Lambda/I)e\cong B$.  Note that the main Theorem \ref{thm:main} verifies this speculation in the case when $M$ consists of only one indecomposable non-projective direct summand.

\section{The case when the base algebra is a trivial extension}\label{sec:trivext}

As trivial extension provides a rather well-understood class of symmetric algebras, we look at gendo-symmetric algebras whose base is a trivial extension algebra.

\subsection{Description via Morita context ring} \label{ssec:triv-gendo}
In this section, let $\Lambda$ be an algebra and $M$ be a $\Lambda$-module.
Put $E:=\End_{\Lambda}(M)$.
Then we have $K$-linear maps
\begin{align}\label{eq:dualbimod}
\begin{array}{ccc}
DM\otimes_E M & \to& D\Lambda \\
f \otimes m & \mapsto & (a\mapsto (f(ma)))
\end{array} \text{ and } 
\begin{array}{ccc}
M\otimes_\Lambda DM & \to& DE \\
m\otimes f& \mapsto & (\theta \mapsto f(\theta(m)))
\end{array}.
\end{align}
It is routine to check that these maps are $\Lambda$-$\Lambda$-bimodule homomorphism and an $E$-$E$-bimodule homomorphism, respectively.  Since $M$ is an $E$-$\Lambda$-bimodule, we can replace $\otimes_\Lambda$ by $\otimes_{\Triv(\Lambda)}$, and $\otimes_E$ by $\otimes_{\Triv(\Lambda)}$, respectively in these maps.  Hence, composing with the inclusions of bimodules $D\Lambda\to \Triv(\Lambda)$ and $DE\to \Triv(E)$ respectively, we obtain bimodule homomorphisms
\[
DM\otimes_{\Triv(E)} M \to \Triv(\Lambda) \quad\text{and}\quad  M\otimes_{\Triv(\Lambda)} DM \to \Triv(E).
\]
Therefore, we have a Morita context ring
\[\begin{pmatrix}
\Triv(\Lambda) & DM \\ M & \Triv(E)
\end{pmatrix} \text{ with a 2-sided ideal }\begin{pmatrix}
0 & 0 \\ 0 & DE\end{pmatrix}.\]
The induced quotient ring is the Morita context ring
\[\begin{pmatrix}
\Triv(\Lambda) & DM \\ M & E
\end{pmatrix}\]
where the defining bimodule homomorphism $DM\otimes_{\Triv(E)}M\to \Triv(\Lambda)$ is the same as before while $M\otimes_{\Triv(\Lambda)} DM \to 
\Triv(E)$ becomes zero.

The following algebra isomorphisms will be very helpful in understanding a gendo-symmetric algebra whose base is a trivial extension.

\begin{lemma}\label{lem:triv(tria)}
There are algebra isomorphisms
\begin{align}
\Triv(\Tria({}_{\Lambda}DM_E)) & \cong \begin{pmatrix}
\Triv(\Lambda) & DM \\ M & \Triv(E)
\end{pmatrix} \label{eq:coext1},\\
\End_{\Triv(\Lambda)}(\Triv(\Lambda)\oplus M) & \cong \frac{\Triv(\Tria({}_{\Lambda}DM_E))}{\left( \begin{smallmatrix}
0 & 0 \\ 0 & DE\end{smallmatrix}\right)} \cong \begin{pmatrix}
\Triv(\Lambda) & DM \\ M & E
\end{pmatrix}. \label{eq:coext2}
\end{align}
\end{lemma}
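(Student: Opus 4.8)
The plan is to establish the two isomorphisms in Lemma \ref{lem:triv(tria)} essentially by unwinding definitions, since both sides are built from the same underlying vector spaces. The key conceptual observation is that a trivial extension $\Triv(R)=R\oplus DR$ of a triangular matrix ring, and a Morita context ring assembled from trivial extensions, both decompose as $2\times 2$ block structures over the same four corner spaces; the content of the lemma is that the multiplications match up. So first I would write down explicitly, as a vector space, $\Tria({}_\Lambda DM_E)=\bigl(\begin{smallmatrix}\Lambda & DM\\ 0 & E\end{smallmatrix}\bigr)$ and then form its trivial extension $\Triv(\Tria({}_\Lambda DM_E))$, identifying the dual $D\Tria({}_\Lambda DM_E)$ via the duality as the block matrix with entries $D\Lambda$, $DE$, $M$, and $D(DM)\cong M$ placed in the transposed positions. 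The natural candidate for the isomorphism \eqref{eq:coext1} sends a pair $(r,\phi)\in R\oplus DR$ to the block matrix recording the $\Lambda$-component and the $D\Lambda$-component in the upper-left corner (giving $\Triv(\Lambda)$), the $E$ and $DE$ components in the lower-right corner (giving $\Triv(E)$), and the $DM$ and $M$ components in the off-diagonal corners.

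For \eqref{eq:coext1}, the main step is to verify this assignment is multiplicative. Here I would check that the multiplication in $\Triv(\Tria)$, namely $(r,\phi)(s,\psi)=(rs, r\psi+\phi s)$ using the triangular ring structure on the $R$-part and the bimodule action on the $DR$-part, corresponds exactly to block-matrix multiplication on the right-hand side, where each diagonal block multiplies via its own trivial-extension rule and the off-diagonal blocks pair up through the bimodule maps in \eqref{eq:dualbimod}. This is a careful but routine bookkeeping computation: I would organise it by comparing the two multiplications corner-by-corner, the subtle point being to confirm that the term $f(ma)$ appearing in the pairing $DM\otimes M\to D\Lambda$ (and dually into $DE$) is precisely the component of $\phi s$ landing in the $D\Lambda$ (resp.\ $DE$) slot. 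That the bimodule homomorphisms defining the Morita context are exactly those induced from the duality is what makes the two sides coincide.

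For the chain \eqref{eq:coext2}, I would proceed in two moves. The first isomorphism identifies the quotient of $\Triv(\Tria({}_\Lambda DM_E))$ by the ideal $\bigl(\begin{smallmatrix}0&0\\0&DE\end{smallmatrix}\bigr)$ with $\End_{\Triv(\Lambda)}(\Triv(\Lambda)\oplus M)$; the second identifies that quotient with the Morita context ring having $E$ (rather than $\Triv(E)$) in the lower-right corner. The second isomorphism is immediate from \eqref{eq:coext1}: killing the $DE$-part of $\Triv(E)$ in the lower-right block, together with killing the induced map $M\otimes DM\to\Triv(E)$ down to zero in the $DE$-direction, turns $\Triv(E)$ into $E$, exactly as described in the paragraph preceding the lemma. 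For the first isomorphism I would invoke Example \ref{eg:Morita-context}(2): the Auslander context presentation says $\End_{\Triv(\Lambda)}(\Triv(\Lambda)\oplus M)$ is the Morita context ring with corners $\Triv(\Lambda)$, $\Hom_{\Triv(\Lambda)}(M,\Triv(\Lambda))$, $M$, and $\End_{\Triv(\Lambda)}(M)$. Thus the remaining task is to show $\End_{\Triv(\Lambda)}(M)\cong E=\End_\Lambda(M)$ and $\Hom_{\Triv(\Lambda)}(M,\Triv(\Lambda))\cong DM$, with the defining pairings matching.

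The step I expect to be the main obstacle is precisely this last identification of the Hom-spaces over $\Triv(\Lambda)$ with the expected corners. Because $M$ is a $\Lambda$-module viewed over $\Triv(\Lambda)$ through the surjection $\Triv(\Lambda)\twoheadrightarrow\Lambda$ (so the ideal $D\Lambda$ acts as zero), one must check that $\Triv(\Lambda)$-linearity reduces to $\Lambda$-linearity, giving $\End_{\Triv(\Lambda)}(M)\cong\End_\Lambda(M)=E$, and that $\Hom_{\Triv(\Lambda)}(M,\Triv(\Lambda))$ picks out only the $D\Lambda$-component of the target (since any homomorphism must send $M$ into the annihilator of $D\Lambda$, which lands in $D\Lambda$), yielding $\Hom_\Lambda(M,D\Lambda)\cong DM$ via the symmetry \eqref{eq:dual} of $\Triv(\Lambda)$ together with the adjunction $\Hom_\Lambda(M,D\Lambda)\cong D(M)$. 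Verifying that the bimodule structures and the two defining pairings of the Auslander context translate correctly into the maps of \eqref{eq:dualbimod} is where the care is needed; once these identifications are in place, the three rings in \eqref{eq:coext2} are manifestly the same object presented in slightly different ways.
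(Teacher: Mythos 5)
Your proposal is correct and follows essentially the same route as the paper: identify $D\Tria({}_\Lambda DM_E)$ in block-matrix form and match the trivial-extension multiplication with the Morita context multiplication corner by corner for \eqref{eq:coext1}, then use the Auslander context presentation of $\End_{\Triv(\Lambda)}(\Triv(\Lambda)\oplus M)$ and identify the corners $\Hom_{\Triv(\Lambda)}(M,\Triv(\Lambda))\cong DM$ and $\End_{\Triv(\Lambda)}(M)\cong E$ for \eqref{eq:coext2}. Your annihilator argument for the upper-right corner and your observation that the $\Triv(\Lambda)$-action on $M$ factors through $\Lambda$ are minor, equally valid variants of the paper's appeals to the symmetric duality \eqref{eq:dual} and to the grading on $\Triv(\Lambda)$.
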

\begin{proof}
For convenience, we denote simply by $\Tria:=\Tria({}_{\Lambda}DM_E)$ the triangular matrix ring.
The $\Tria$-bimodule $D\Tria$ can be written in matrix form as \[
\begin{pmatrix}
D\Lambda & 0 \\ M & DE
\end{pmatrix}.\]
Here, the left $\Tria$-action is defined by combining (i) the left $\Lambda$-action on $D\Lambda$, (ii) the left $E$-action on $DE$ and on $M$, and (iii) the map $DM\otimes_E M \to D\Lambda$ in \eqref{eq:dualbimod}; similarly, the right $\Tria$-action is defined by (i) the right $\Lambda$-action on $D\Lambda$ and on $M$, (ii) the right $E$-action on $DE$, and (iii) the map $M\otimes_{\Lambda} DM\to DE$ in \eqref{eq:dualbimod}.  It is routine to check that the vector-space isomorphism \eqref{eq:coext1} now lifts to an algebra isomorphism using the definition of trivial extension algebras.

For the gendo algebra over $\Triv(\Lambda)$, we can write out the endomorphism algebra in the form of the Morita context ring as explained in Example \ref{eg:Morita-context}
\[
\begin{pmatrix}
\End_{\Triv(\Lambda)}(\Triv(\Lambda)) & \Hom_{\Triv(\Lambda)}(M,\Triv(\Lambda))\\ \Hom_{\Triv(\Lambda)}(\Triv(\Lambda),M) & \End_{\Triv(\Lambda)}(M)
\end{pmatrix} \cong  \begin{pmatrix}
\Triv(\Lambda) & DM \\ M & E
\end{pmatrix}.
\]
Note that we used the natural isomorphism of the duality functors for a symmetric algebra to obtain the isomorphism in the upper right entry.
For the lower right entry, as $M$ is a graded $\Triv(\Lambda)$-module concentrated in degree $0$, endomorphisms of $M$ in $\mod\Triv(\Lambda)$ are the same as those in $\mod^\Z\Triv(\Lambda)$, hence, the same as those in $\mod \Lambda$.

Comparing the description of the defining bimodule homomorphisms of the Morita context ring corresponding to $\Triv(\Delta({}_{\Lambda}DM_E))$ in \eqref{eq:coext1} and that of the Auslander context in Example \ref{eg:Morita-context} (2), we see that the vector-space isomorphism is actually an algebra one.
\end{proof}

Before using Lemma \ref{lem:triv(tria)} to study gendo-symmetric algebras, we give a similar result which is interesting albeit not essential for the investigation.

\begin{proposition}\label{gendoisom}
Suppose $\Lambda$ is a finite-dimensional algebra and $M\in \mod\Lambda$ satisfying $\Hom_\Lambda(M,\Lambda)=0$.  Then the following holds.
\begin{enumerate}[\rm (1)]
\item There are algebra isomorphisms $\End_{\Lambda}(\Lambda\oplus M) \cong \begin{pmatrix}\Lambda & 0 \\ M & E \end{pmatrix}$ and $\Triv(\End_{\Lambda}(\Lambda\oplus M)) \cong \Triv(\Tria({}_\Lambda DM_E))$.

\item The global dimension of $\End_{\Lambda}(\Lambda\oplus M)$ is finite if and only if so is that of $\Tria({}_{\Lambda}DM_E)$.
\end{enumerate}
Moreover, its trivial extension is isomorphic to that of $\Tria({}_{\Lambda}DM_E)$.
\end{proposition}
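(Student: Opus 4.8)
The plan is to treat the three assertions of Proposition~\ref{gendoisom} in turn, with Example~\ref{eg:Morita-context} and Lemma~\ref{lem:triv(tria)} as the main inputs. The first isomorphism in (1) is essentially immediate: Example~\ref{eg:Morita-context}(2) presents $A:=\End_\Lambda(\Lambda\oplus M)$ as the Auslander context ring $\left(\begin{smallmatrix}\Lambda & \Hom_\Lambda(M,\Lambda)\\ M & E\end{smallmatrix}\right)$, and the hypothesis $\Hom_\Lambda(M,\Lambda)=0$ collapses the upper-right corner, leaving the (lower) triangular matrix ring $A\cong\left(\begin{smallmatrix}\Lambda & 0\\ M & E\end{smallmatrix}\right)$. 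Nothing more is needed here.

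For the trivial-extension isomorphism in (1) (which is also the \emph{Moreover} clause), I would compute the dual bimodule $DA$ by mirroring the calculation in the proof of Lemma~\ref{lem:triv(tria)}: dualising the lower-triangular ring $A$ produces the $A$-bimodule $DA\cong\left(\begin{smallmatrix}D\Lambda & DM\\ 0 & DE\end{smallmatrix}\right)$, whose off-diagonal actions are encoded by the maps in~\eqref{eq:dualbimod}. Adding $A$ and $DA$ entrywise then gives $\Triv(A)\cong\left(\begin{smallmatrix}\Triv\Lambda & DM\\ M & \Triv E\end{smallmatrix}\right)$, which by~\eqref{eq:coext1} is exactly $\Triv(\Tria({}_\Lambda DM_E))$. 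The one step with genuine content---and the main obstacle---is checking that the products $DM\otimes M\to\Triv\Lambda$ and $M\otimes DM\to\Triv E$ coming from the trivial-extension multiplication on $\Triv(A)$ agree with the bimodule homomorphisms defining the Morita context ring in~\eqref{eq:coext1}; both must reduce to~\eqref{eq:dualbimod}, and here one has to track the left/right actions and the variance of $D$ attentively, although no computation beyond that already carried out in Lemma~\ref{lem:triv(tria)} is required.

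For (2), I would observe that both $A\cong\left(\begin{smallmatrix}\Lambda & 0\\ M & E\end{smallmatrix}\right)$ and $\Tria({}_\Lambda DM_E)=\left(\begin{smallmatrix}\Lambda & DM\\ 0 & E\end{smallmatrix}\right)$ are triangular matrix rings sharing the \emph{same} pair of diagonal algebras $\Lambda$ and $E$. By the standard global-dimension criterion for triangular matrix rings, such a ring has finite global dimension if and only if both diagonal algebras do; the finiteness is insensitive to the connecting bimodule, since finite global dimension of the corners forces finite projective dimension of that bimodule automatically. (For the lower-triangular $A$ one applies the criterion to $A^\op$ and uses $\gldim A=\gldim A^\op$.) As $A$ and $\Tria({}_\Lambda DM_E)$ have the same diagonal, we obtain $\gldim A<\infty\iff\gldim\Lambda,\gldim E<\infty\iff\gldim\Tria({}_\Lambda DM_E)<\infty$, which is (2). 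Thus the only part requiring more than a direct appeal to the cited results or routine bookkeeping is the bimodule-homomorphism matching in the second paragraph.
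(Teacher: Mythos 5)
Your proof is correct. Part (1), including the \emph{Moreover} clause, follows the paper's own route exactly: collapse the Auslander context of Example \ref{eg:Morita-context}(2) using $\Hom_\Lambda(M,\Lambda)=0$, then repeat the $D(-)$ computation from the proof of Lemma \ref{lem:triv(tria)} for the lower-triangular ring and match the defining bimodule maps against \eqref{eq:dualbimod} -- you correctly identify that matching as the only step with content.

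For part (2) you take a genuinely different and more elementary route. The paper deduces (2) from (1): since $\Triv(\Gamma)\cong\Triv(\Tria({}_\Lambda DM_E))$, Happel's theorem gives $\Db(\mod\Gamma)\simeq\stmod^{\Z}\Triv(\Gamma)\simeq\stmod^{\Z}\Triv(\Tria)\simeq\Db(\mod\Tria)$, and finiteness of global dimension is a derived invariant. You instead observe that both $\End_\Lambda(\Lambda\oplus M)\cong\left(\begin{smallmatrix}\Lambda&0\\ M&E\end{smallmatrix}\right)$ and $\Tria({}_\Lambda DM_E)=\left(\begin{smallmatrix}\Lambda&DM\\ 0&E\end{smallmatrix}\right)$ are triangular matrix rings with the same diagonal corners, and invoke the classical bounds $\max\{\gldim A,\gldim B\}\le\gldim\left(\begin{smallmatrix}A&N\\0&B\end{smallmatrix}\right)\le\gldim A+\gldim B+1$ (valid here since any finite-dimensional module over a corner of finite global dimension has finite projective dimension), so both algebras have finite global dimension exactly when $\Lambda$ and $E$ do. This is a correct argument, it does not even need the trivial-extension isomorphism from (1), and it quietly sidesteps a subtlety in the paper's version: Happel's functor $\Db(\mod A)\to\stmod^{\Z}\Triv(A)$ is an equivalence only when $\gldim A<\infty$, so the paper's chain of equivalences really needs to be phrased in terms of density of that embedding. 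What the paper's approach buys in exchange is an actual derived equivalence between $\End_\Lambda(\Lambda\oplus M)$ and $\Tria({}_\Lambda DM_E)$, which is stronger than the bare global-dimension statement and fits the derived-equivalence theme of the article; your argument yields only the numerical conclusion. You should still cite a precise reference for the triangular matrix ring criterion (e.g.\ the standard inequalities in the literature on global dimensions of triangular matrix rings), but no mathematical gap remains.
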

\begin{proof}
(1) This follows by writing $\End_{\Lambda}(\Lambda\oplus M)$ in the form as in Example \ref{eg:Morita-context} (2) and use a similar argument as in the Proof of Lemma \ref{lem:triv(tria)}.

(2) For simplicity, take $\Gamma:=\End_{\Lambda}(\Lambda\oplus M)$ and $\Tria:=\Tria({}_{\Lambda}DM_E)$.  Since $\Triv(\Gamma)\cong \Triv(\Tria)$ by (1), it follows from Happel's equivalence \cite{H} that 
\[
\Db(\mod\Gamma)\simeq \stmod^\Z {\Triv(\Gamma)} \simeq \stmod^\Z{\Triv(\Tria)} \simeq \Db(\mod \Tria),
\]
where $\stmod^\Z\Triv(A)$ denotes the stable module category of the graded $\Triv(A)$-module when $\Triv(A)$ is graded by $\deg(A)=0$ and $\deg(DA)=1$.  Since finiteness of global dimension is preserved under derived equivalences, the claim follows.
\end{proof}
\begin{remark}
\begin{enumerate}
\item  For the reader who is a familiar with repetitive algebras, one can see using the matrix form of $\End_{\Lambda}(\Lambda\oplus M)$ in (1) that its repetitive algebra is the same as that of  $\Tria:=\Tria({}_{\Lambda}DM_E)$.

\item Combining with Lemma \ref{gendoisom} somewhat says that taking trivial extension and taking endomorphism algebra of generator ``commute up to a defect":
\[
\xymatrix@C=12pt{
&& \Lambda\oplus M \ar@{~>}[ld]_{\text{Triv}} \ar@{~>}[rd]^{\text{endo. alg.}}  & \\
&\Triv(\Lambda)\oplus M \ar@{~>}[d]_{\text{endo. alg.}} & & \End_{\Lambda}(\Lambda\oplus M)\ar@{~>}[d]^{\text{Triv}} \\
{\left(\begin{smallmatrix}0 & 0 \\ 0 &D\End_{\Lambda}(M)\end{smallmatrix}\right)} \ar@{^{(}->}[r]& \End_{\Triv(\Lambda)}\big(\Triv(\Lambda)\oplus M\big) \ar@{->>}[rr]  &&  \Triv(\End_{\Lambda}(\Lambda\oplus M))
}
\]
\end{enumerate}
\end{remark}

A special case of Lemma \ref{lem:triv(tria)} allows us to determine  the representation-finiteness of a gendo algebra with trivial extension base.

\begin{lemma}\label{lem:gendosemibrick}
Suppose $M = \bigoplus_{i=1}^r M_i$ is a $\Lambda$-module whose indecomposable direct summands $M_1, \ldots, M_r$ are pairwise Hom-orthogonal, i.e. $\Hom_{\Lambda}(M_i, M_j)\cong K$ for $i=j$; zero, otherwise.
Then the gendo-symmetric algebra $\End_{\Triv(\Lambda)}(\Triv(\Lambda)\oplus M)$ is representation-finite if and only if $\Tria({}_{\Lambda}DM_{\End_{\Lambda}(M)})$ is an iterated tilted algebra of Dynkin type.
\end{lemma}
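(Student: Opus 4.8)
The plan is to reduce the representation-finiteness of $\End_{\Triv(\Lambda)}(\Triv(\Lambda)\oplus M)$ to a statement purely about the triangular matrix algebra $\Tria:=\Tria({}_{\Lambda}DM_E)$, where $E=\End_\Lambda(M)$. The key leverage is Lemma \ref{lem:triv(tria)}, specifically the isomorphism \eqref{eq:coext2}, which identifies $\End_{\Triv(\Lambda)}(\Triv(\Lambda)\oplus M)$ with the quotient of $\Triv(\Tria)$ by the two-sided ideal $\left(\begin{smallmatrix}0&0\\0&DE\end{smallmatrix}\right)$. So first I would note that $\End_{\Triv(\Lambda)}(\Triv(\Lambda)\oplus M)$ is a quotient of the symmetric algebra $\Triv(\Tria)$, and conversely the whole problem can be rephrased in terms of $\Triv(\Tria)$ provided the difference caused by modding out the ideal is controlled.

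The main structural input is the Hom-orthogonality hypothesis: since the indecomposable summands $M_1,\dots,M_r$ are pairwise Hom-orthogonal, the algebra $E=\End_\Lambda(M)$ is isomorphic to the product $K^r$ of copies of the base field, one for each summand. This makes $DE\cong E$ a semisimple $E$-bimodule, and the ideal $\left(\begin{smallmatrix}0&0\\0&DE\end{smallmatrix}\right)$ sits in the socle of the relevant projective-injective summands of $\Triv(\Tria)$. My plan is to identify, for each $i$, the summand $DE_i\cong K$ with the socle $\soc(P_i)$ of an indecomposable projective-injective $\Triv(\Tria)$-module $P_i$ corresponding to the idempotent of the $i$-th copy of $K$ in the lower-right corner. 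Then passing from $\Triv(\Tria)$ to the quotient $\End_{\Triv(\Lambda)}(\Triv(\Lambda)\oplus M)$ is exactly rejection of these socles, so I would invoke the Rejection Lemma \ref{lem:reject}: since $\Triv(\Tria)/\soc(P_i)$ is representation-finite if and only if $\Triv(\Tria)$ is, iterating over all $i$ shows $\End_{\Triv(\Lambda)}(\Triv(\Lambda)\oplus M)$ is representation-finite if and only if $\Triv(\Tria)$ is.

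Once the problem is transferred to $\Triv(\Tria)$, the remaining step is to characterise when $\Triv(\Tria)$ is representation-finite. Here I would appeal to the trichotomy Theorem \ref{tricot} together with the classification in Theorem \ref{thm:RFSym}\,(i): the trivial extension $\Triv(\Tria)$ is representation-finite precisely when $\Tria$ is an iterated tilted algebra of Dynkin type. More carefully, since $\Triv(\Tria)$ is a symmetric algebra, its being representation-finite forces it to lie in one of the classes of Theorem \ref{thm:RFSym}; the fact that it is a trivial extension of $\Tria$, combined with the characterisation of trivial extensions of iterated tilted algebras of Dynkin type as exactly the representation-finite trivial extensions (a standard consequence of the Hughes--Waschbüsch theory underlying Theorem \ref{thm:RFSym}\,(i) and the derived-invariance in Theorem \ref{thm:asa}\,(i)), pins down $\Tria$ as iterated tilted of Dynkin type. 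Chaining these equivalences gives the claim.

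The main obstacle I anticipate is the precise bookkeeping in the second step: verifying that the ideal $\left(\begin{smallmatrix}0&0\\0&DE\end{smallmatrix}\right)$ really is a direct sum of socles of \emph{indecomposable projective-injective} $\Triv(\Tria)$-modules, so that the Rejection Lemma applies summand-by-summand. This requires using the Hom-orthogonality to see that $E\cong K^r$ and that each corresponding indecomposable projective $\Triv(\Tria)$-module indexed by a lower-right idempotent is projective-injective with one-dimensional socle lying in the $DE$ part; without Hom-orthogonality the corner algebra $E$ need not be semisimple and the ideal would not decompose so cleanly. The identification of $\soc(P_i)$ with the $i$-th summand of $DE$ under the trivial-extension symmetric form is where the symmetry isomorphism $\Triv(\Tria)\cong D\Triv(\Tria)$ must be used explicitly, and it is the one genuinely non-formal point in the argument; everything else is an assembly of Lemma \ref{lem:triv(tria)}, Lemma \ref{lem:reject}, and Theorem \ref{tricot}.
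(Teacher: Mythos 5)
Your proposal is correct and follows essentially the same route as the paper's own proof: identify $\End_{\Triv(\Lambda)}(\Triv(\Lambda)\oplus M)$ with $\Triv(\Tria({}_{\Lambda}DM_E))/I$ via Lemma \ref{lem:triv(tria)}, use Hom-orthogonality to get $E\cong K^r$ so that $I$ is a direct sum of simple socles of indecomposable projective-injective modules, reject these socles repeatedly via Lemma \ref{lem:reject}, and conclude with the characterisation of representation-finite trivial extensions as those of iterated tilted algebras of Dynkin type (which the paper cites as \cite{AHR}). No gaps.
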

\begin{proof}
By the assumption of $M$, the endomorphism algebra $E:=\End_{\Lambda}(M)$ is isomorphic to the semi-simple algebra $K^r$, which means that $\Triv(E)\cong (K[x]/(x^2))^r$.  In particular, it follows from Lemma \ref{lem:triv(tria)} that the gendo-symmetric algebra $\End_{\Triv(\Lambda)}(\Triv(\Lambda)\oplus M)$ is isomorphic to $\Triv(\Tria(DM))/I$, where $I$ is an ideal given by the direct sum of simple socles of projective(-injective) modules.  Hence, by repeatedly applying Drozd--Kirichenko rejection lemma (Lemma \ref{lem:reject}), $\Lambda$ is representation-finite if and only if so is $\Triv(\Tria(DM))$.  Now the claim follows from the fact that representation-finite trivial extension algebras are those of iterated tilted of Dynkin type \cite{AHR}.
\end{proof}

\begin{example}
Let $\Lambda:=K\overrightarrow{\mathbb{A}_n}$ be the path algebra of the linearly oriented Dynkin quiver of type $\mathbb{A}_n$, and $M=\bigoplus_{i=1}^n S_i$ be the direct sum of all simple $\Lambda$-modules.
Then $H:=\Tria(DM)\cong KQ/I$, where $Q$ is the quiver
\[
\xymatrix{
1\ar[r]^{a}\ar[d]^{b} & 2\ar[r]^{a}\ar[d]^{b} & \cdots \ar[r]^{a} & n-1 \ar[r]^{a}\ar[d]^{b} & n\ar[d]^{b} \\
1' & 2' & & (n-1)'  & n'
}
\]
and $I$ is generated by paths of the form $ab$.
This is an iterated tilted algebra of type $\mathbb{A}_{2n}$. Indeed,
denote by $P_x$ the indecomposable projective $H$-module corresponding to a vertex $x\in Q_0$, and define $T_{i'}:=\mathrm{Cok}(P_{i'}\xrightarrow{b\cdot-} P_i)$ for $i\in \{1, \ldots, n\}$, then $T:=\bigoplus_{i=1}^n P_i \oplus T_{i'}$ is a tilting $H$-module whose endomorphism algebra is $K\overrightarrow{\mathbb{A}_{2n}}$.
Hence, Lemma \ref{lem:gendosemibrick} tells us that the gendo-symmetric algebra $\End_{\Triv(\Lambda)}(\Triv(\Lambda)\oplus M)$ is representation-finite.  We remark that this gendo-symmetric algebra is a special gendo Brauer tree algebra studied in \cite{CM}.
\end{example}

\subsection{The case when $\Lambda$ is a path algebra}\label{subsec:A=kQ}

To understand representation-finite gendo-symmetric algebra with trivial extension base, we look at a special case of $\Tria(DM)$ in this subsection, namely, when $\Lambda=KQ$ is the path algebra of a finite acyclic quiver $Q$, and $M$ is an indecomposable $KQ$-module.  By Proposition \ref{prop:nuder}, up to almost $\nu$-stable derived equivalence of $\End_\Lambda(\Lambda \oplus M)$, it suffices to pick one $\Triv(KQ)$-module $M$ from each $\tau$-orbit.  Since $\Triv(KQ)$ is representation-finite, the pushdown functor $\Db(\mod KQ)\simeq \stmod^\Z \Triv(KQ) \to \stmod \Triv(KQ)$ is dense.  In particular, as $\{D(KQe_v)\mid v\in Q_0\}$ forms is a complete list of $\tau$-orbit representative of $\Db(\mod KQ)\simeq \stmod^\Z \Triv(KQ)$, it is also a complete list of $\tau$-orbit representative of $\stmod\Triv(KQ)$.  Thus, we only need to consider the case when $M$ is an indecomposable injective $KQ$-module, say, $M=D(\Lambda e_v)$ for $v\in Q_0$. 

Note that we have $\End_{\Lambda}(M)\cong K$ and $\Tria(DM)$ is the one-point coextension algebra of $\Lambda$ by $M$.  Hence, we have 
\begin{align}
\Tria(DM)\cong KQ'\;\;\text{ where }Q'_0 = Q_0 \sqcup \{ x \}\text{ and }Q'_1 = Q_1 \sqcup \{ v\to x\},\label{eq:extend}\\
\text{and }\;\;\End_{\Triv(KQ)}\big(\Triv(KQ)\oplus D(KQe_v)\big) \cong \Triv(KQ')/\soc P_x. \label{eq:Triv(KQ')/soc}
\end{align}
by Lemma \ref{lem:triv(tria)}.
Note that if $\Delta$ is the underlying graph of $Q$, then the underlying graph of $Q'$ is the extension graph that was denoted by $\Delta^v$ in Section \ref{sec:intro}.

For ease of reference, we label the vertices of the Dynkin graphs as follows.
\begin{align}
\mathbb{A}_n \ (n\geq 1): \quad& \xymatrix{ 1\ar@{-}[r] & 2\ar@{-}[r] & 3 \ar@{-}[r]& \cdots \ar@{-}[r]& n-2\ar@{-}[r]& n-1\ar@{-}[r] & n} \notag\\
\mathbb{D}_n \ (n\geq 4): \quad& \xymatrix{ 1\ar@{-}[r] & 2\ar@{-}[r] & 3 \ar@{-}[r]& \cdots \ar@{-}[r]& n-2\ar@{-}[r]\ar@{-}[d] & n-1\\ &&&&n& } \label{eq:Dn}\\
\mathbb{E}_n \ (n=6,7,8): \quad& \xymatrix{ 1\ar@{-}[r] & 2\ar@{-}[r] & 3 \ar@{-}[r]\ar@{-}[d] &\cdots \ar@{-}[r]& n-2\ar@{-}[r] & n-1\\ & & n & & } \notag 
\end{align}

We give a complete list of $Q$ and $M$ with $\End_{\Triv(KQ)}(\Triv(KQ)\oplus M)$  representation-finite.

\begin{lemma}\label{finitetype}
Let $Q$ be an acyclic quiver and $v$ its vertex.
Then the following are equivalent.
\begin{enumerate}
\item $\End_{\Triv(KQ)}(\Triv(KQ)\oplus D(KQe_v))$ is representation-finite.
\item The quiver $Q'$ of \eqref{eq:extend} is simply-laced Dynkin.
\item The pair $(Q, v)$ satisfies one of the following.
\begin{enumerate}
\item $Q$ is of type $\mathbb{A}_n$ with $v\in \{1, 2, n-1, n\}$ or $(n,v)\in \{(5,3), (6,3), (6,4), (7,3), (7,5)\}$.
\item $Q$ is of type $\mathbb{D}_n$ with $v=1$ or $(n,v)\in\{(n,n-1), (n,n)\mid n=4, 5, 6, 7\}$.
\item $Q$ is of type $\mathbb{E}_n$ with $(n,v)\in\{(6,1), (6,5), (7,5)\}$.
\
\end{enumerate}
\end{enumerate}
\end{lemma}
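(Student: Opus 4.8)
The plan is to prove the equivalence of the three conditions in Lemma \ref{finitetype} by establishing the cycle $(2)\Leftrightarrow(3)$ first (a purely combinatorial step) and then connecting it to $(1)$ using the machinery already developed. For the combinatorial core, recall from \eqref{eq:extend} that $Q'$ is obtained from $Q$ by adjoining a single new vertex $x$ and a single arrow $v\to x$, so its underlying graph is the extension graph $\Delta^v$ where $\Delta$ is the underlying graph of $Q$. Since $Q$ is already assumed to be of simply-laced Dynkin type in the intended application (and in any case must be, as $\Triv(KQ)$ representation-finite forces $Q$ Dynkin by Theorem \ref{tricot}), the statement $(2)\Leftrightarrow(3)$ reduces to the finite check: \emph{for which vertices $v$ of a Dynkin graph $\Delta\in\{\mathbb{A}_n,\mathbb{D}_n,\mathbb{E}_n\}$ is the extension $\Delta^v$ again Dynkin?} I would carry this out by a direct case analysis using the classification of Dynkin graphs, checking each family against the vertex labellings fixed in \eqref{eq:Dn}.

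\begin{proof}[Proof sketch for $(2)\Leftrightarrow(3)$]
One runs through the possibilities for $\Delta^v$ in each type. For $\Delta=\mathbb{A}_n$: attaching a leaf at an endpoint ($v\in\{1,n\}$) yields $\mathbb{A}_{n+1}$; attaching at a next-to-end vertex ($v\in\{2,n-1\}$) yields a graph of type $\mathbb{D}_{n+1}$; attaching at an interior vertex produces a graph with a central node of valency $3$ whose three branch-lengths determine whether it is $\mathbb{D}$, $\mathbb{E}$, or non-Dynkin, which singles out exactly the sporadic pairs $(5,3),(6,3),(6,4),(7,3),(7,5)$ giving types $\mathbb{E}_6,\mathbb{E}_7,\mathbb{E}_7,\mathbb{E}_8,\mathbb{E}_8$ respectively. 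For $\Delta=\mathbb{D}_n$ and $\Delta=\mathbb{E}_n$ the same branch-length bookkeeping at the trivalent (or would-be quadrivalent) vertex pins down precisely the listed pairs, all other choices creating either a vertex of valency $\geq 4$ or an over-long branch, hence a non-Dynkin (Euclidean or wild) graph.
\end{proof}

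For the implication $(2)\Leftrightarrow(1)$, the key input is \eqref{eq:Triv(KQ')/soc}, which identifies $\End_{\Triv(KQ)}(\Triv(KQ)\oplus D(KQe_v))$ with $\Triv(KQ')/\soc P_x$. Since $P_x$ is the indecomposable projective at the new source $x$ (which has no incoming arrows in $Q'$, so $P_x$ is projective-injective in $\Triv(KQ')$), the Drozd--Kirichenko rejection lemma (Lemma \ref{lem:reject}) applies: $\Triv(KQ')/\soc P_x$ is representation-finite if and only if $\Triv(KQ')$ is. By Theorem \ref{tricot}, $\Triv(KQ')$ is representation-finite if and only if $Q'$ is Dynkin, which is exactly condition $(2)$. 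Chaining these equivalences gives $(1)\Leftrightarrow(2)$, and together with the combinatorial step we obtain the full cycle. The main obstacle is not conceptual but the care needed in the combinatorial case analysis of $(2)\Leftrightarrow(3)$, in particular verifying that the trivalent-vertex branch-length conditions exhaust exactly the sporadic list and that $P_x$ is genuinely projective-injective so that Lemma \ref{lem:reject} is applicable in every case.
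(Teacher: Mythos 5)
Your argument is correct and follows essentially the same route as the paper: the paper proves $(1)\Leftrightarrow(2)$ by specialising Lemma \ref{lem:gendosemibrick} (which is itself the identification of Lemma \ref{lem:triv(tria)} plus the rejection lemma plus the characterisation of representation-finite trivial extensions), and you simply unwind that to the isomorphism \eqref{eq:Triv(KQ')/soc} followed by Lemma \ref{lem:reject} and Theorem \ref{tricot}, while $(2)\Leftrightarrow(3)$ is the same finite combinatorial check the paper leaves as an exercise. One trivial slip: $x$ is a sink of $Q'$ (the new arrow is $v\to x$), not a source, but this is immaterial since $\Triv(KQ')$ is symmetric and hence every indecomposable projective, in particular $P_x$, is automatically projective-injective, so the rejection lemma applies without further checking.
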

\begin{proof}
\underline{(1)$\Leftrightarrow$(2)}: Since $D(KQe_v)$, we always have simple endomorphism ring, and so $\Tria(KQe_v)\cong KQ'$ as explained above.  It follows from Lemma \ref{lem:gendosemibrick} that (1) holds if and only if $Q'$ is a simply laced Dynkin quiver.

\underline{(2)$\Leftrightarrow$(3)}: This is a simple combinatorial exercise from the construction \eqref{eq:extend} of $Q'$.
\end{proof}

Consequently, we have a characterisation for representation-finiteness of gendo-symmetric algebras with trivial extension base.

\begin{proposition}\label{classify-triv-ext}
Let $B$ be a representation-finite symmetric algebra of type $(\Delta,1)$, and $M$ be an indecomposable non-projective $B$-module.  Then the following are equivalent. 
\begin{enumerate}
\item $\Gamma:=\End_B(B\oplus M)$ is representation-finite.
\item The extension $\Delta^v$ of $\Delta$ at the vertex $v:=v(M)\in \Delta_0$ is Dynkin.
\item $\Gamma$ is almost $\nu$-stable derived equivalent to $\Triv(KQ')/\soc(P_x)$, where $Q'$ is a Dynkin quiver whose underlying graph is an extension $\Delta^u$ of $\Delta$ at some vertex $u\in \Delta_0$.
\end{enumerate}
\end{proposition}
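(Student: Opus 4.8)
The plan is to prove Proposition~\ref{classify-triv-ext} by reducing everything to the single-injective-module case already settled in Lemma~\ref{finitetype}, using the machinery of almost $\nu$-stable derived equivalences built up in Section~\ref{subsec:derived}. The key observation is that $B$ is a representation-finite symmetric algebra of type $(\Delta,1)$, so by Theorem~\ref{thm:asa}(i) it is derived equivalent to $\Triv(K\vec{\Delta})$ for any orientation $\vec{\Delta}$ of $\Delta$. Since both algebras are symmetric, Rickard's theorem (recalled before Theorem~\ref{thm:asa}) lifts this to a stable equivalence $\Phi\colon\stmod B\xrightarrow{\sim}\stmod\Triv(K\vec{\Delta})$, and this equivalence is automatically almost $\nu$-stable because the algebras are symmetric, exactly as in the proof of Proposition~\ref{prop:nuder}.

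First I would establish \textbf{(1)$\Leftrightarrow$(2)}. By Proposition~\ref{prop:nuder} applied to the stable equivalence $\Phi$, the algebra $\End_B(B\oplus M)$ is representation-finite if and only if $\End_{\Triv(K\vec{\Delta})}\bigl(\Triv(K\vec{\Delta})\oplus\Phi(M)\bigr)$ is. Now the discussion in Subsection~\ref{subsec:A=kQ} shows that every $\tau$-orbit of $\stmod\Triv(K\vec{\Delta})$ has a representative of the form $D(K\vec{\Delta}\,e_u)$ for some vertex $u\in\Delta_0$, so up to a further stable equivalence (again invoking Proposition~\ref{prop:nuder}) we may replace $\Phi(M)$ by such an indecomposable injective. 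The crucial point is that the $\tau$-orbit of $M$ corresponds, under the RFSy-type identification described in Section~\ref{sec:intro}, to the vertex $v=v(M)\in\Delta_0$, and this is precisely the vertex $u$ picked out by the orbit of $\Phi(M)$ (both are invariant under the canonical graph automorphism). At this point Lemma~\ref{finitetype}, specifically the equivalence (1)$\Leftrightarrow$(2) there, tells us that $\End_{\Triv(K\vec{\Delta})}\bigl(\Triv(K\vec{\Delta})\oplus D(K\vec{\Delta}\,e_v)\bigr)$ is representation-finite if and only if the quiver $Q'$ of \eqref{eq:extend}, whose underlying graph is exactly $\Delta^v$, is simply-laced Dynkin. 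Combining these equivalences yields (1)$\Leftrightarrow$(2).

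Next, for \textbf{(2)$\Leftrightarrow$(3)} and the full statement, I would use the explicit isomorphism \eqref{eq:Triv(KQ')/soc}, which identifies $\End_{\Triv(KQ)}\bigl(\Triv(KQ)\oplus D(KQe_v)\bigr)$ with $\Triv(KQ')/\soc P_x$. Chaining the almost $\nu$-stable derived equivalences: Proposition~\ref{prop:nusta}(3) transports the stable equivalence $\Phi$ to an almost $\nu$-stable derived equivalence between $\End_B(B\oplus M)$ and $\End_{\Triv(K\vec{\Delta})}\bigl(\Triv(K\vec{\Delta})\oplus\Phi(M)\bigr)$, and a second application handles the passage to the injective representative. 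Taking $u=v$ and $Q'$ to be the Dynkin quiver with underlying graph $\Delta^v$, the isomorphism \eqref{eq:Triv(KQ')/soc} then exhibits the target as $\Triv(KQ')/\soc(P_x)$, giving (2)$\Rightarrow$(3). The reverse implication (3)$\Rightarrow$(1) is immediate from Proposition~\ref{prop:main(3)=>(1)}, since $\Triv(KQ')$ is a representation-finite symmetric algebra when $Q'$ is Dynkin (by Theorem~\ref{tricot}) and $\soc(P_x)$ is an ideal.

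The main obstacle I anticipate is the bookkeeping in the middle step: verifying that the vertex $v(M)$ attached to the $\tau$-orbit of $M$ under the abstract RFSy-type identification genuinely matches the vertex $u$ arising from the injective representative $D(K\vec{\Delta}\,e_u)$ in $\stmod\Triv(K\vec{\Delta})$, and that this matching is compatible with the graph automorphism ambiguity. One must ensure that the extension graph $\Delta^{v}$ built from the abstract orbit vertex coincides with the quiver $Q'$ of \eqref{eq:extend} built from the concrete injective, so that Lemma~\ref{finitetype} applies verbatim. This is essentially a compatibility check between the stable-equivalence-invariant notion of $v(M)$ and the derived-equivalence-class representative chosen via Theorem~\ref{thm:asa}(i); once it is in place, all three equivalences follow mechanically from the results already assembled.
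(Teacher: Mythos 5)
Your proposal is correct and follows essentially the same route as the paper: pass via a stable equivalence to $\Triv(K\vec{\Delta})$ with $M$ sent to an indecomposable injective $D(K\vec{\Delta}e_v)$, transfer representation-finiteness by Proposition~\ref{prop:nuder}, apply Lemma~\ref{finitetype} for (1)$\Leftrightarrow$(2), use the isomorphism \eqref{eq:Triv(KQ')/soc} for (2)$\Rightarrow$(3), and close the loop with Proposition~\ref{prop:main(3)=>(1)}. The compatibility check you flag between $v(M)$ and the injective representative is handled in the paper simply by the definition of $v(M)$ via the $\tau$-orbit identification (and condition (2) is invariant under the graph automorphism ambiguity), so no extra work is needed.
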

\begin{proof}
As mentioned, the indecomposable injective $KQ$-modules form a complete set of $\tau$-orbit representatives of indecomposable non-projective $\Triv(KQ)$-modules.  Hence, we have a stable equivalence $F:\stmod B\to \stmod\Triv(KQ)$ with $F(M)=D(KQe_v)$.  By Proposition \ref{prop:nuder} (3), it follows that $\Gamma$ is representation-finite if and only if so is $\Gamma'$.  Thus, the equivalence between (1) and (2) follows immediately from the first two equivalent conditions in Lemma \ref{finitetype}.

Suppose (2) holds.  Using the stable equivalence $F$ in the previous paragraph and the isomorphism $\Gamma'\cong \Triv(KQ')/\soc(P_x)$ from Lemma \ref{lem:triv(tria)}, we get (3) as required.

Suppose (3) holds.  Since $Q'$ is Dynkin, $\Triv(KQ')$ is representation-finite symmetric of type $(Q',1)$.  It follows by rejection lemma \ref{lem:reject} that $\Triv(KQ')/\soc(P_x)$ is representation-finite.  Representation-finiteness of $\Gamma$ then follows from Proposition \ref{prop:main(3)=>(1)}.
\end{proof}

We obtain a similar list as Lemma \ref{finitetype} for determining infinite-tame type.

\begin{proposition}\label{tametype}
Let $Q$ be an acyclic quiver and $v$ its vertex.
Then the following are equivalent.
\begin{enumerate}
\item $\End_{\Triv(KQ)}(\Triv(KQ)\oplus D(KQe_v))$ is of infinite-tame type.
\item The quiver $Q'$ of \eqref{eq:extend} is simply-laced Euclidean non-Dynkin.
\item The pair $(Q,v)$ satisfies one of the following.
\begin{enumerate}
\item $Q$ is of type $\mathbb{A}_n$ with $(n,v)\in\{(7,3), (7,4), (8,3), (8,5)\}$.
\item $Q$ is of type $\mathbb{D}_n$ with $v=2$ or $(n,v)\in\{(8,7), (8,8)\}$.
\item $Q$ is of type $\mathbb{E}_n$ with $(n,v)\in \{(6,6), (7,1), (8,7)\}$.
\end{enumerate}
\end{enumerate}
\end{proposition}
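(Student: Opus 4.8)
The plan is to mirror the structure of Lemma~\ref{finitetype}, replacing ``Dynkin'' by ``Euclidean non-Dynkin'' throughout, and to lean on the same machinery that reduced the representation-finite case to a purely combinatorial check on the extension quiver $Q'$.

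\emph{Step 1 (the implication (1)$\Leftrightarrow$(2)).} First I would observe that the entire argument for (1)$\Leftrightarrow$(2) in Lemma~\ref{finitetype} never used Dynkin-ness specifically; it used only that $\End_\Lambda(D(KQe_v))\cong K$, so that $\Tria(DM)\cong KQ'$ with $Q'$ the one-point coextension quiver of \eqref{eq:extend}, together with Lemma~\ref{lem:gendosemibrick}. The key input is Theorem~\ref{tricot}, whose trichotomy gives the infinite-tame case on equal footing with the finite and wild cases: the gendo-symmetric algebra $\End_{\Triv(KQ)}(\Triv(KQ)\oplus D(KQe_v))$ is of infinite-tame type exactly when $\Triv(KQ')$ is, which by Theorem~\ref{tricot}(iii) happens exactly when $Q'$ is Euclidean non-Dynkin. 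The one point requiring care is that Lemma~\ref{lem:gendosemibrick} is phrased only for the representation-finite case, so I would either extend its proof (the rejection lemma, Lemma~\ref{lem:reject}, preserves representation-tameness as well, as its own statement records) or argue directly: by \eqref{eq:Triv(KQ')/soc} the gendo-symmetric algebra is $\Triv(KQ')/\soc P_x$, and Lemma~\ref{lem:reject} says this has the same representation type as $\Triv(KQ')$. This delivers (1)$\Leftrightarrow$(2) cleanly.

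\emph{Step 2 (the implication (2)$\Leftrightarrow$(3)).} This is the combinatorial enumeration: I would classify, for each simply-laced Dynkin $Q$ with vertex $v$, when the one-point coextension $Q'$ of \eqref{eq:extend}---i.e. adjoining a new vertex $x$ joined to $v$---has underlying graph of extended Dynkin (Euclidean) type $\widetilde{\mathbb{A}}$, $\widetilde{\mathbb{D}}$, or $\widetilde{\mathbb{E}}$. Since $Q'$ has one more vertex than $Q$, and must itself be a tree with at most one branch point raised in complexity, this is a finite case check driven by the shapes listed in \eqref{eq:Dn}. For type $\mathbb{A}_n$, attaching a pendant vertex at an interior vertex $v$ produces a star $\widetilde{\mathbb{D}}$ or an $\widetilde{\mathbb{E}}$ graph only for the tabulated small values; for $\mathbb{D}_n$ and $\mathbb{E}_n$ one similarly checks which $v$ yield $\widetilde{\mathbb{D}}_{n+1}$, $\widetilde{\mathbb{E}}_7$, $\widetilde{\mathbb{E}}_8$. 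The listed triples in (3) are precisely the $(Q,v)$ for which $\Delta^v$ is Euclidean non-Dynkin, and this matches the known vertex-by-vertex analysis of extended Dynkin diagrams.

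\emph{Main obstacle.} The genuine content is entirely in Step 2: the enumeration must be exhaustive and correct, since an omitted or spurious entry would make the list wrong, and the boundary cases (where adding a pendant vertex just tips a Dynkin graph into its extended version, e.g. $\mathbb{E}_7$ at $v=1$ giving $\widetilde{\mathbb{E}}_8$, versus $\mathbb{E}_8$ at an endpoint giving a wild graph) require attention to the Coxeter-number thresholds. I would organise this by first recording the complete list of simply-laced Euclidean graphs and their valency profiles, then for each Dynkin $Q$ determining which single pendant attachment reaches one of them; the graph automorphisms of $\Delta$ (noted in the introduction as identifying $v(M)$ only up to automorphism) reduce the number of vertices that must be checked. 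Step 1, by contrast, is a near-verbatim transport of the finite-type argument and should present no difficulty beyond citing Theorem~\ref{tricot}(iii) and Lemma~\ref{lem:reject} in place of their representation-finite analogues.
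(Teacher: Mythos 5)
Your proposal matches the paper's proof: the equivalence (1)$\Leftrightarrow$(2) is obtained exactly as in Lemma~\ref{finitetype}, with Lemma~\ref{lem:gendosemibrick} replaced by the trichotomy of Theorem~\ref{tricot} combined with Lemma~\ref{lem:triv(tria)} and the Rejection Lemma~\ref{lem:reject} (which, as you note, preserves tameness), while (2)$\Leftrightarrow$(3) is the same finite combinatorial check of extension graphs. One trivial slip in your aside: attaching a pendant vertex to $\mathbb{E}_7$ at $v=1$ yields $\widetilde{\mathbb{E}}_7$ (arms $(1,3,3)$ on $8$ vertices), not $\widetilde{\mathbb{E}}_8$; this does not affect the argument.
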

\begin{proof}
For the equivalence between (1) and (2), the argument is almost the same as in the proof of Lemma \ref{lem:gendosemibrick}. The modification needed is to replace the use of Lemma \ref{lem:gendosemibrick} by Theorem \ref{tricot} (combining with Lemma \ref{lem:triv(tria)} and Rejection lemma).
\end{proof}

\section{Modified Brauer tree algebra as base algebra}\label{sec:modBTree}

In this section, we investigate gendo-symmetric algebras of the form $\End_B(B\oplus M)$ where $B$ is a modified Brauer star algebra.  As a historical note, this base algebra was not given any specific name when it was first investigated in \cite{W}; we took the name `modified Brauer tree algebra' that was used in \cite{Sko}.  The modified Brauer star algebra is the modified Brauer tree algebra associated to a (Brauer) star.  Let us recall its definition now.

\begin{definition}\label{def:modBSt}
Let  $n\geq2$.  The \emph{modified Brauer star algebra} with $n$ simples and parameter $\epsilon\in \{0,1\}$, denoted by $B_n^{\epsilon}$, is presented by the following quiver with relations:

\[\begin{array}{cc}
\vcenter{\xymatrix@C=15pt@!R=1pt{
&&n \ar[ldd]_{\beta_n} &&\ar[ll]_{\beta_{n-1}}n-1&\\
Q: &&&&&\\
& \ar@(ul,dl)_{\alpha} 1 \ar[rdd]_{\beta_1} &&&&\ar[luu]_{\beta_{n-2}} \vdots  \\\\
&&2 \ar[rr]_{\beta_2} &&3 \ar[ruu]_{\beta_3}&}}
&
\begin{cases}
\ \alpha^2\beta_1=\beta_n\alpha^2=0, \\
\ \beta_1\beta_2\cdots \beta_n=\alpha^2, \\
\ \beta_n\beta_1=\epsilon \beta_n\alpha\beta_1, \\
\ \beta_i \beta_{i+1} \cdots \beta_n \alpha \beta_1 \cdots \beta_i = 0 ~~ ( 2 \leq i \leq n ).
\end{cases}
\end{array}\]
\end{definition}
As mentioned in Section \ref{sec:prelim}, $B_n^0$ and $B_n^1$ are isomorphic if and only if the characteristic of the ground field $K$ is not $2$. 

The following is the aim of this section.

\begin{proposition}\label{GSofNRBT}
Let $M$ be an indecomposable non-projective $B_n^{\epsilon}$-module.
Then the following are equivalent:
\begin{enumerate}
\item The endomorphism algebra $\Gamma:= \End_{B_n^{\epsilon}}(B_n^{\epsilon} \oplus M)$ is representation-finite. 

\item $M$ belongs to the $\tau$-orbit of $\rad P_i$ for some $i\neq1$.
\end{enumerate}
In such a case, $\Gamma$ is almost $\nu$-stable derived equivalent to $B_{n+1}^{\epsilon}/\soc P_i$ for any $i \neq 1$.
\end{proposition}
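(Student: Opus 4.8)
The plan is to exploit the fact, supplied by Proposition \ref{prop:nuder}, that representation-finiteness of $\End_{B_n^\epsilon}(B_n^\epsilon\oplus M)$ is an invariant of the $\tau$-orbit of $M$: since $B_n^\epsilon$ is representation-finite symmetric, any stable self-equivalence of $\stmod B_n^\epsilon$ (in particular a power of the Auslander--Reiten translate) lifts to a derived equivalence, so it suffices to test one representative per $\tau$-orbit. First I would therefore analyse the stable Auslander--Reiten quiver $\mathbb{Z}\mathbb{D}_{3n}/\langle\tau^{2n-1}\rangle$ of $B_n^\epsilon$ (of RFSy-type $(\mathbb{D}_{3n},1/3)$ by Theorem \ref{thm:asa}) and show that the modules $\rad P_2,\dots,\rad P_n$ all lie in the single $\tau$-orbit corresponding to the valency-$1$ vertex at the end of the longest branch of $\mathbb{D}_{3n}$; concretely, using $\tau\cong\Omega^2$ for a symmetric algebra, one checks that these radicals are related to one another by powers of $\tau$ and hence share one orbit. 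This identifies condition (2) with the choice of that one distinguished orbit and links the statement to the third bullet of Theorem \ref{thm:main}.

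For the implication (2)$\Rightarrow$(1) together with the structural assertion, I would compute the gendo algebra for the representative $M=\rad P_i$ directly. Writing $\Gamma=\End_{B_n^\epsilon}(B_n^\epsilon\oplus\rad P_i)$ in its Auslander-context form from Example \ref{eg:Morita-context}(2) and using the natural identification $\Hom_{B_n^\epsilon}(\rad P_i,B_n^\epsilon)\cong D(\rad P_i)$ coming from \eqref{eq:dual}, the task reduces to a quiver-with-relations computation. I expect the outcome to be an isomorphism $\Gamma\cong B_{n+1}^\epsilon/\soc P_j$ for a suitable $j\neq 1$: the extra indecomposable summand contributes exactly one new vertex, and the arrows and relations around it reassemble the presentation of Definition \ref{def:modBSt} for $n+1$ simples with the socle of $P_j$ removed. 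Granting this, representation-finiteness of $\Gamma$ follows because $B_{n+1}^\epsilon$ is representation-finite symmetric and $P_j$ is projective-injective, so $B_{n+1}^\epsilon/\soc P_j$ is representation-finite by the Rejection Lemma \ref{lem:reject} (alternatively by Proposition \ref{prop:main(3)=>(1)}). An isomorphism is in particular an almost $\nu$-stable derived equivalence, and for an arbitrary $M$ in the orbit I would transport this conclusion along the lifted stable self-equivalence via Proposition \ref{prop:nusta}(3); the independence of $j$ (the phrase ``for any $i\neq 1$'') follows from the rotational symmetry of $B_{n+1}^\epsilon$ among the non-exceptional vertices $2,\dots,n+1$, which places all $B_{n+1}^\epsilon/\soc P_j$ with $j\neq 1$ in a single almost $\nu$-stable derived equivalence class.

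The converse (1)$\Rightarrow$(2) is where I expect the real work. By the orbit reduction above it amounts to showing that, for a representative $M$ of every other $\tau$-orbit---the interior vertices $2,\dots,3n-2$ of the spine, the two prong vertices, and in particular the excluded case $\rad P_1$---the algebra $\End_{B_n^\epsilon}(B_n^\epsilon\oplus M)$ is representation-infinite. My plan is to compute, for each such representative, the $\Ext^1$-quiver of the gendo algebra near the newly added vertex and then invoke Proposition \ref{prop:RFRad}: passing to the radical-square-zero quotient, I would exhibit a connected component of the separated quiver that is non-Dynkin, for instance by producing a vertex carrying two loops as in Example \ref{Exrad2} or an embedded Euclidean configuration. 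The main obstacle is precisely this case-by-case control of the Loewy structure of the relevant radicals and syzygies over the modified Brauer star---made more delicate by the parameter $\epsilon$ in characteristic $2$---together with the bookkeeping needed to see that no orbit other than the distinguished one yields a Dynkin separated quiver. Since the number of orbits grows with $n$, the crux is to use the explicit description of the indecomposables of $B_n^\epsilon$ to reduce these to a short, $n$-independent list of local quiver shapes, each of which is then dispatched uniformly by the separated-quiver criterion.
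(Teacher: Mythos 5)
Your overall architecture matches the paper's: reduce to one representative per $\tau$-orbit via Proposition \ref{prop:nuder}, prove (2)$\Rightarrow$(1) by an explicit isomorphism $\End_{B_n^\epsilon}(B_n^\epsilon\oplus\rad P_i)\cong B_{n+1}^\epsilon/\soc P_{i+1}$ followed by the Rejection Lemma (this is exactly the paper's Lemma \ref{EndMBTArad}), and prove (1)$\Rightarrow$(2) by exhibiting representation-infinite subalgebras via separated quivers and Example \ref{Exrad2}. The gap is in the last step, and you have correctly located it yourself: you say the crux is to reduce the growing list of orbits to ``a short, $n$-independent list of local quiver shapes,'' but you do not supply the mechanism that achieves this. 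The paper's mechanism is Lemma \ref{IdempRingIsom}: for $X\cong Xe\Lambda$ one has $(e+e_M)\End_\Lambda(\Lambda\oplus M)(e+e_M)\cong\End_{e\Lambda e}(e\Lambda e\oplus Me)$. Choosing $e=e_1+e_j$ for a suitable second vertex $j$ gives $eB_n^\epsilon e\cong B_2^\epsilon$ and sends each orbit representative $M$ from the list in Proposition \ref{modMBTAlg} ($H_{r,1}$, $L_{r,s}$, $S_1$, $D$) to an indecomposable $B_2^\epsilon$-module; this collapses the whole problem to six explicit computations over $B_2^\epsilon$, done once and for all. Without this (or an equivalent device) your plan to ``compute the $\Ext^1$-quiver near the new vertex'' for each of the $3n$ rows is not yet a proof. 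Note also that one of the six base cases ($M=D$, Row $3n$) is not dispatched by the separated-quiver criterion in the paper but by the order-$2$ graph automorphism of $\mathbb{D}_{6}$ reducing it to the $S_1$ case.

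Two smaller corrections. First, your justification of the independence of $j$ in the final assertion via a ``rotational symmetry of $B_{n+1}^\epsilon$ among the non-exceptional vertices'' does not work: any quiver automorphism must fix the unique looped vertex $1$ and hence fixes the whole cycle, so there is no such symmetry. The correct argument is the one you already set up: the modules $\rad P_i$, $i\neq 1$, lie in a single $\tau$-orbit, so Proposition \ref{prop:nusta}(3) places all the algebras $B_{n+1}^\epsilon/\soc P_{i+1}\cong\End_{B_n^\epsilon}(B_n^\epsilon\oplus\rad P_i)$ in one almost $\nu$-stable derived equivalence class. Second, the worry you raise about the parameter $\epsilon$ in characteristic $2$ largely evaporates once the reduction to $B_2^\epsilon$ is in place, since the $Q$-colored quiver descriptions of the relevant orbit representatives are independent of $\epsilon$ and the local $\Ext^1$-quiver computations go through uniformly.
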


Let us start by fixing the $\tau$-orbit representatives of $B_n^\epsilon$.  Recall first that the stable AR-quiver of $B_n^\epsilon$ is given by the translation quiver $\mathbb{Z}\mathbb{D}_{3n}/\langle\tau^{2n-1}\rangle$. 
Using the labelling of the vertices of the Dynkin diagram $\mathbb{D}_{3n}$ in subsection \ref{subsec:A=kQ}, it will be convenience to call the $\tau$-orbit of non-projective modules the \emph{$i$-th row} (of the stable AR-quiver), for $i$ the corresponding vertex in $\mathbb{D}_{3n}$.  We also say that $M$ is \emph{in row $i$} in such a case.  We will use the following $\tau$-orbit representatives.

\begin{proposition}{\rm  \cite[Section 4]{W}}\label{modMBTAlg}
The following indecomposable modules form a complete list of $\tau$-orbit representatives of indecomposable non-projective $B_n^{\epsilon}$-modules.
\begin{itemize}
\item \underline{Row 1}: The radical $\rad P_i$ (which is denoted by $E_{i,n}$ in \cite{W}) of the indecomposable projective module $P_i$ corresponding to any vertex $i\in\{2,3, \ldots, n\}$. 

\item \underline{Row $n-r+1$ for $r\in\{n, n-1, \ldots, 2\}$}: The module $H_{r,1}$ which can be described by the $Q$-colored quiver
\[
\xymatrix@R=15pt@C=40pt{
1 \ar[r]^{\beta} \ar[d]_{\alpha}& 2&& \\
1 \ar[r]^{\beta} & 2\ar[r]^{\beta} & \cdots \ar[r]^{\beta}& **[r]{r+1.}
}
\]

\item \underline{Row $n+r+s$ with $0\leq r < n$, $0\leq s < n$}: The module $L_{r,s}$, which can be described by the $Q$-colored quiver
\[
\xymatrix@!0@R=20pt@C=40pt{**[l]{n-r+1}\ar[rd]^{\beta}&&&&&\\
&\ddots \ar[rd]^{\beta}&&1\ar[dd]^{\alpha} \ar[rd]^{\beta}  &&\\
&&n\ar[rd]^{\beta}&&2 \ar[rd]^{\beta}&  \\
&&&1&&\ddots \ar[rd]^{\beta} \\
&&&&&& **[r]{s+1},}
\]
where $n-r+1$ is regarded as $1$ when $r=0$. Note that multiple distinct $L_{r,s}$'s can belong to the same $\tau$-orbit.  We can take, for example, $L_{0,1}, L_{0,2},\ldots, L_{0,n-1}, L_{1,n-1}, \ldots, L_{n-1,n-1}$ as the list of (representatives of) pairwise distinct $\tau$-orbits.

\item \underline{Row $3n-1$}: The simple module $S_1$ (which is denoted by $E_{1,0}$ in \cite{W}) corresponding to the vertex $1$ of $Q$.

\item \underline{Row $3n$}: The module $D:= P_1 / \alpha\beta_1 P_1 \cong \alpha P_1$ (which is denoted by $P$ in \cite{W}).  It can be also described by the $Q$-colored quiver
\[
\xymatrix{
& 2 \ar[r]^{\beta} & 3 \ar[r]^{\beta} & \cdots \ar[r]^{\beta} &n-1 \ar[r]^{\beta}& n \ar[rd]^{\beta} &\\
1 \ar[ru]^{\beta}\ar[rrr]^{\alpha}&&&1\ar[rrr]^{\alpha}&&& 1.
}
\]
\end{itemize}
\end{proposition}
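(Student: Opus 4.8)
The statement is recalled from \cite[Section 4]{W}; my plan is to re-derive the list from the shape of the stable Auslander--Reiten quiver together with explicit syzygy and knitting computations. Since $B_n^{\epsilon}$ is symmetric of RFSy-type $(\mathbb{D}_{3n},1/3)$, its stable AR-quiver is $\mathbb{Z}\mathbb{D}_{3n}/\langle\tau^{2n-1}\rangle$; hence there are exactly $3n$ $\tau$-orbits (the rows), one for each vertex of $\mathbb{D}_{3n}$, each of length $2n-1$. The task reduces to exhibiting one indecomposable non-projective module in each of the $3n$ rows and checking, by a count, that the displayed families do so without repetition.

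I would first read off the indecomposable projectives $P_1,\ldots,P_n$ and their radicals directly from the quiver with relations in Definition \ref{def:modBSt}, recording each module by its $Q$-colored (Loewy) diagram as in the statement. The exceptional vertex $1$, carrying the loop $\alpha$ with $\alpha^2=\beta_1\cdots\beta_n$, makes $P_1$ --- and hence $\rad P_1$ and $D=\alpha P_1$ --- structurally distinct from the uniserial-type $P_2,\ldots,P_n$. Because $B_n^{\epsilon}$ is symmetric, $\tau\cong\Omega^2$ on the stable category, so $\Omega$ permutes the rows while $\Omega^2=\tau$ fixes each of them. Syzygies of the simple and radical modules then locate the boundary rows: one checks that $S_1$ sits in row $3n-1$, that $D=\alpha P_1$ occupies row $3n$, and that $\rad P_2,\ldots,\rad P_n$ lie in a single orbit representing row $1$. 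The interior rows are obtained by knitting AR-sequences outward --- equivalently, by continuing the syzygy walk --- which produces the modules $H_{r,1}$ in one block of rows and the two-parameter family $L_{r,s}$ in the remaining rows, with the Loewy structure displayed above; this reproduces the computation of \cite[Section 4]{W} under the dictionary $\rad P_i=E_{i,n}$, $S_1=E_{1,0}$, $D=P$.

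With the modules in hand, I would verify that each is indecomposable and non-projective --- its Loewy diagram is connected and differs from every $P_i$ --- and that it lies in the asserted row, by tracking its syzygy depth, or AR-quiver coordinate, relative to a seed. The genuinely delicate point is the $L_{r,s}$ family: distinct pairs $(r,s)$ can give modules in the same $\tau$-orbit, so I must confirm that the chosen sublist $L_{0,1},\ldots,L_{0,n-1},L_{1,n-1},\ldots,L_{n-1,n-1}$ meets each of its rows exactly once, a finite check governed by the value of $r+s$. Completeness then follows by counting the displayed rows against the $3n$ available orbits.

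The principal obstacle is the syzygy and knitting bookkeeping itself: one must propagate the $Q$-colored quiver presentations through $\Omega$ and confirm the Loewy structure at each stage, and for the growing family $L_{r,s}$ this requires careful use of the relation $\beta_1\cdots\beta_n=\alpha^2$ together with the $\epsilon$-dependent relation $\beta_n\beta_1=\epsilon\,\beta_n\alpha\beta_1$. Since $\epsilon$ affects the module structure only when $\chr K=2$, in that case the verification must be carried out for $\epsilon=0$ and $\epsilon=1$ separately, even though the rows and their representatives are labelled uniformly.
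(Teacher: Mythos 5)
The paper does not prove Proposition \ref{modMBTAlg}: it is quoted verbatim from \cite[Section 4]{W}, so there is no internal argument to compare against. Your outline is the natural way to reconstruct that computation, and its skeleton is sound: the stable AR-quiver is $\mathbb{Z}\mathbb{D}_{3n}/\langle\tau^{2n-1}\rangle$, so there are exactly $3n$ rows, $\tau\cong\Omega^2$ since $B_n^\epsilon$ is symmetric, and the job reduces to exhibiting one module per row by knitting AR-sequences or walking along syzygies. Your observation that $L_{r,s}$ and $L_{r',s'}$ share a row exactly when $r+s=r'+s'$ is the right criterion; carried out honestly, that check would in fact reveal that the displayed sample list $L_{0,1},\ldots,L_{0,n-1},L_{1,n-1},\ldots,L_{n-1,n-1}$ has only $2n-2$ entries for the $2n-1$ rows $n,\ldots,3n-2$ and omits the row of $L_{0,0}$ (row $n$), which the paper itself uses later in Row~(\ref{case-iv}).

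The gap is that every substantive step is deferred to ``one checks.'' Two of those checks are not routine. First, placing a module in a \emph{specific} row: all three valency-one vertices of $\mathbb{D}_{3n}$ look identical locally (an AR-sequence whose stable middle term is indecomposable), so showing that $\rad P_i$ sits at the end of the \emph{long} arm while $S_1$ and $D$ occupy the two short-arm ends cannot be read off from one AR-sequence; it requires tracing an arm to the branch vertex, or an orbit-size/additive-function argument. Relatedly, rows $3n-1$ and $3n$ are only well defined up to the order-two graph automorphism of $\mathbb{D}_{3n}$ (the paper leans on exactly this ambiguity in Row~(\ref{case-ii}) and in Corollary \ref{classify-modiBr}), so the genuine content there is that $S_1$ and $D$ lie in the two \emph{distinct} short-arm-end orbits, not which label each receives. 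Second, the claim that $\rad P_2,\ldots,\rad P_n$ form a single $\tau$-orbit needs an explicit computation of $\Omega^2(\rad P_i)$ from the projective presentations; it does not follow from general principles. None of this is an obstruction in principle --- it is the computation recorded in \cite{W} --- but as written your proposal is a plan for that computation rather than the computation itself.
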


We stress that the $Q$-colored quiver description above is independent of $\epsilon\in\{0,1\}$.  On the other hand, the structure of $\rad P_i$ depends on $\epsilon$, but it will not affect our arguments to come.  For the ease of reader, we will display the explicit AR-quiver of $B_2^\epsilon$ after the following lemma.

\begin{lemma}\label{EndMBTArad}
For each $1<i\leq n$, we have an algebra isomorphism
\[
\End_{B_n^{\epsilon}}(B_n^{\epsilon}\oplus\rad P_i)\cong B_{n+1}^{\epsilon}/\soc P_{i+1}.
\]
In particular, the gendo-symmetric algebra $\End_{B_n^{\epsilon}}(B_n^{\epsilon}\oplus\rad P_i)$ is  representation-finite.
\end{lemma}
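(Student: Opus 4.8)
The plan is to compute $\Gamma:=\End_{B_n^{\epsilon}}(B_n^{\epsilon}\oplus \rad P_i)$ directly as a quiver algebra and then match it with $B_{n+1}^{\epsilon}/\soc P_{i+1}$. I would start from the Auslander-context description of Example~\ref{eg:Morita-context}(2): since $B_n^{\epsilon}$ is symmetric, the natural isomorphism~\eqref{eq:dual} of dualities lets me write
\[
\Gamma\cong\begin{pmatrix} B_n^{\epsilon} & D(\rad P_i)\\ \rad P_i & E\end{pmatrix},\qquad E:=\End_{B_n^{\epsilon}}(\rad P_i),
\]
with multiplication governed by the pairings~\eqref{eq:dualbimod}. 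The first concrete task is to pin down $\rad P_i$ and $E$. Reading off the relations of Definition~\ref{def:modBSt}, for $2\le i\le n$ the projective $P_i$ is uniserial: the two arrows $\alpha,\beta_1$ leaving the loop-vertex $1$ collapse into one direction because $\beta_n\beta_1=\epsilon\beta_n\alpha\beta_1$, so the composition factors of $P_i$ run $S_i,S_{i+1},\dots,S_n,S_1,S_1,S_2,\dots,S_i$ from top to socle, of total length $n+2$. Hence $\rad P_i$ is uniserial of length $n+1$ with $\operatorname{top}\rad P_i=S_{i+1}$ (read cyclically, so $S_1$ when $i=n$) and $\soc\rad P_i=S_i$; a short check that no non-scalar endomorphism can be well-defined on such a module shows that $\rad P_i$ is a brick, i.e. $E\cong K$. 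In particular the new vertex of $\Gamma$ attached to the summand $\rad P_i$ will carry no loop.

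Next I would determine the quiver of $\Gamma$, which has the $n$ vertices of $B_n^{\epsilon}$ together with one new vertex $*$ for $\rad P_i$. The radical inclusion $\rad P_i\hookrightarrow P_i$ is irreducible in $\add(B_n^{\epsilon}\oplus\rad P_i)$ and gives an arrow between $i$ and $*$, while the projective cover $P_{i+1}\twoheadrightarrow\rad P_i$ in $B_n^{\epsilon}$ (legitimate since $\operatorname{top}\rad P_i=S_{i+1}$, read cyclically) gives an arrow between $*$ and $i+1$; crucially the old arrow $\beta_i$ is no longer irreducible, because the map it induces between $P_i$ and $P_{i+1}$ now factors as $P_{i+1}\twoheadrightarrow\rad P_i\hookrightarrow P_i$. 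All the remaining arrows of $B_n^{\epsilon}$ survive unchanged, and the loop $\alpha$ stays at vertex $1$. The net effect is to subdivide the arrow $\beta_i$, turning the $n$-cycle of $B_n^{\epsilon}$ into the $(n+1)$-cycle of $B_{n+1}^{\epsilon}$; relabelling $*$ as the vertex $i+1$ and shifting the old vertices $i+1,\dots,n$ to $i+2,\dots,n+1$ identifies the two quivers.

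It then remains to match the relations, and this is the step I expect to be the main obstacle. Most relations of $B_n^{\epsilon}$ transport directly to those of $B_{n+1}^{\epsilon}$ under the relabelling, but two points require genuine computation through the context multiplication~\eqref{eq:dualbimod}. First, one must verify that the parameter $\epsilon$ is reproduced faithfully, i.e. that the relation $\beta_n\beta_1=\epsilon\beta_n\alpha\beta_1$ of $B_{n+1}^{\epsilon}$ really emerges from the pairing $D(\rad P_i)\otimes\rad P_i\to B_n^{\epsilon}$; this is most delicate in the boundary case $i=n$, where the very arrow $\beta_n$ being subdivided feeds into this relation. Second, and this is what produces the socle quotient, the projective $\Gamma$-module at $*$, namely $\Hom_{B_n^{\epsilon}}(B_n^{\epsilon}\oplus\rad P_i,\rad P_i)$, has dimension $\dim\rad P_i+\dim E=(n+1)+1=n+2$, whereas the corresponding projective $P_{i+1}$ of $B_{n+1}^{\epsilon}$ is uniserial of length $n+3$; the single missing bottom composition factor is exactly $\soc P_{i+1}\cong S_{i+1}$, while every other projective of $\Gamma$ is honestly projective-injective (this reflects that $\Gamma$, being gendo-symmetric with $\rad P_i$ the unique non-injective summand, has a unique non-injective projective). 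Confirming that this socle is the only discrepancy yields the isomorphism $\Gamma\cong B_{n+1}^{\epsilon}/\soc P_{i+1}$. As a sanity check one may instead realize $B_{n+1}^{\epsilon}/\soc P_{i+1}$ as a gendo algebra over its corner $e(B_{n+1}^{\epsilon}/\soc P_{i+1})e\cong B_n^{\epsilon}$ (with $e$ the sum of all idempotents except that of $i+1$) and match the defining generator with $B_n^{\epsilon}\oplus\rad P_i$.

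Finally, the ``in particular'' assertion is immediate from the isomorphism: $B_{n+1}^{\epsilon}$ is a representation-finite symmetric algebra, so its indecomposable projective $P_{i+1}$ is projective-injective and the Rejection Lemma~\ref{lem:reject} applies, giving that $B_{n+1}^{\epsilon}/\soc P_{i+1}$, and hence $\Gamma$, is representation-finite. (Alternatively one could invoke Proposition~\ref{prop:main(3)=>(1)}.)
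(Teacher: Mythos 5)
Your proposal is correct and follows essentially the same route as the paper's proof: both identify $\End_{B_n^{\epsilon}}(\rad P_i)\cong K$ so the new vertex carries no loop, observe that the arrow $\beta_i$ gets subdivided because the corresponding map of projectives factors through $\rad P_i$, and then confirm the identification with $B_{n+1}^{\epsilon}/\soc P_{i+1}$ by a dimension count (your comparison of the projective at the new vertex versus the paper's Cartan-matrix check), finishing with the Rejection Lemma. If anything, you are more explicit than the paper about the remaining relation-matching (in particular the $\epsilon$-relation), which the paper leaves as an exercise.
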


\begin{proof}
First observe that $\rad P_i$ has simple endomorphism ring $\End_{B_n^{\epsilon}}(\rad P_i)\cong  K$, so the vertex corresponding to $\rad P_i$ in the quiver of $\Gamma:=\End_{B_n^{\epsilon}}(B_n^{\epsilon}\oplus\rad P_i)$ has no loop attached to it.  Moreover, since any homomorphism $P_j\to P_i$ must factor through $\rad P_i$, the quiver of $\Gamma$ coincides with the quiver $Q$ of $B_{n+1}^\epsilon$ - namely, the vertices $j\leq i$ in $Q$ corresponds to $P_j$, the vertex $i\in Q_0$ corresponds to $\rad P_i$, and all remaining vertices $i<j\leq n+1$ corresponds to $P_{j-1}$.

Now that we have an surjective algebra homomorphism $KQ\to \Gamma$, it remains to verify the Cartan matrices of the two algebras coincide.  This is immediate for the entries involving the projective $B_n^\epsilon$-modules.  The others are also easy to check - for example, $\dim \Hom_{B_n^\epsilon}(P_{j}, \rad P_i)=\dim \Hom_{B_{n+1}^\epsilon}(P_{j},  P_i)$ for all $j=j'<i$.  We leave the rest as an exercise.

Finally, with the isomorphism verified, representation-finiteness of $\Gamma$ follows from Rejection lemma \ref{lem:reject}.
\end{proof}

We demonstrate the proof of Proposition \ref{GSofNRBT} in the case when $n=2$ first to let the reader be familiar with the various indecomposable modules appearing in Proposition \ref{modMBTAlg}.  Then we will show how the case of $n>2$ be can reduced to the $n=2$ case.

Until further notice, we set $B:=B_2^\epsilon$.

We start by drawing the AR-quiver of $B_2^\epsilon$ in the following;
note that indecomposable $\Lambda$-modules can be uniquely identified by its Loewy structure, so the vertices of the AR-quiver are labelled by the Loewy diagrams.

\[
\xymatrix@C=20pt@R=4pt{
&& &   &         &\modUle{1}{12}{21}{1} \ar[rd]  & &  & & \\
&&&   &\moDule{12}{21}{1}  \ar[ru] \ar[rd]& &\moDule{1}{12}{21} \ar@{-->}[ll] \ar[rd]&  & *+[F]{\Module{1}} \ar[rd] \ar@{-->}[ll]&& \moDule{12}{21}{1} \ar@{-->}[ll] \\
&&  &\moDule{12}{121}{1}  \ar[r] \ar[ru] \ar[rd]&  \mOdule{2}{1} \ar[r] &*+[F]{\mOdule{12}{21}} \ar@/^13pt/@{-->}[ll] \ar[r] \ar[ru] \ar[rd]& \mOdule{1}{2} \ar[r] \ar@/_13pt/@{-->}[ll] & \moDule{1}{121}{12} \ar@/^13pt/@{-->}[ll]\ar[rd]\ar[ru] \ar[r]& *+[F]{\moDule{1}{21}{1}}  \ar[r] \ar@/_13pt/@{-->}[ll] & \moDule{12}{121}{1} \ar[r] \ar[ru] \ar@/^13pt/@{-->}[ll] & \mOdule{2}{1} \ar@/_13pt/@{-->}[ll] \\
&&\moDule{12}{121}{21} \ar[ru] \ar[rd]&   & \ar@{-->}[ll]\mOdule{1}{12} \ar[ru] \ar[rd]&   &*+[F]{\mOdule{21}{1}} \ar[ru] \ar@{-->}[ll] \ar[rd]&  & \ar@{-->}[ll] \moDule{12}{121}{21} \ar[ru] & & \\
& \moDule{2}{12}{1} \ar[ru] \ar[rd]  &  & \ar@{-->}[ll] \moDule{1}{12}{2} \ar[ru] \ar[rd]&   &  *+[F]{\mOdule{1}{1}} \ar@{-->}[ll] \ar[ru] \ar[rd]&   &\moDule{2}{12}{1} \ar[ru] \ar@{-->}[ll]&  & & \\
  \moDule{2}{1}{1} \ar[ru] & & \ar@{-->}[ll] \Module{2} \ar[ru] &   &*+[F]{\moDule{1}{1}{2}} \ar@{-->}[ll] \ar[ru] \ar[rd]&   & \ar@{-->}[ll] \moDule{2}{1}{1} \ar[ru] &  && & \\
&&  &&   & \modUle{2}{1}{1}{2} \ar[ru] &&& & 
}
\]

We will carry out a case-by-case calculation, one indecomposable module $M$ for each row ($\tau$-orbit) to determine the representation-finiteness of $A:=\End_B(B\oplus M)$.
The indecomposable non-projective modules we pick for $M$ are the ones framed in the AR-quiver above.

Clearly, the quiver of $A$ has one extra vertex compare to that of $B$.  We will label this new vertex by $M$.  Note that the corresponding primitive idempotent $e_M\in A=\End_B(B\oplus M)$ is given by the composition of the natural projection and inclusion:
\[
e_M: B\oplus M \twoheadrightarrow M \hookrightarrow B\oplus M.
\]
For simplicity, we also let $e:=e_1+e_M\in A$.

\begin{caseM}[$M=\rad P_2$]\label{case-RF}
By Lemma \ref{EndMBTArad}, $A$ is representation-finite.
\end{caseM}
\begin{caseM}[$M=L_{0,0}=\mOdule{1}{1}$]\label{case-iv}
Note that $L_{0,0}$ is in the second row the stable AR-quiver. The projection $P_1\to M$ and inclusion $M\to P_1$ yields two irreducible maps in $\End_\Lambda(P_1\oplus M)\cong e\Gamma e$.  The left-multiplication map $\alpha\cdot-:P_1\to P_1$ induces an endomorphism $\gamma:M\to M$ which is irreducible in $e\Gamma e$.  Hence, the $\Ext^1$-quiver of $e\Gamma e$ contains a subquiver $Q$ of the form
\[
Q:\quad  \vcenter{\xymatrix@C=70pt
{1 \ar@(ul,dl)_{\alpha} \ar@/_10pt/[r]_{\beta_1} &M \ar@(ur,dr)^{\gamma} \ar@/_10pt/[l]_{\beta_2}  
}}.
\]
In fact, this is the whole $\Ext^1$-quiver of $e\Gamma e$.  Anyway, since the separated quiver of $Q$ is of type $\widetilde{\mathbb{A}_3}$, it follows from Proposition \ref{prop:RFRad} that $e\Gamma e$ is representation-infinite, and hence so is $\Gamma$.
\end{caseM}

\begin{caseM}[$M=L_{1,0}=\mOdule{21}{1}$]\label{case-iv-2}
$e\Gamma e\cong \End_\Lambda(P_1\oplus M)$ has the same ordinary quiver as in the previous case \eqref{case-iv} and so the same argument applies.  Hence, $\Gamma$ is representation-infinite.
\end{caseM}
\begin{caseM}[$M=L_{1,1}=\mOdule{12}{21}$]\label{case-iii}
We see that the local algebra $\End_\Lambda(L_{1,1}) \cong e_M \Gamma e_M$ has two generators; equivalently, $e_M\Gamma e_M$ is presented by the ordinary quiver $\xymatrix@1{ \bullet \ar@(ru,rd)^{} \ar@(lu,ld)_{} }$.
Namely, one generator is induced by the action of $\alpha$, and the other corresponds to mapping the composition factor $S_2$ in the top of $M$ to that in the socle.
By Example \ref{Exrad2}, $e_M\Gamma e_M$ is representation-infinite, which means that so is $\Gamma$.
\end{caseM}
\begin{caseM}[$M=S_1$]
\label{case-i}
Similar to the case \eqref{case-iv}, the projection $P_1\to S_1$ and inclusion $S_1\to P_1$ contribute two irreducible maps in $\Gamma$.  It is easy to see that neither of these factor through $P_2$.  So the $\Ext^1$-quiver of $\Gamma$ contains a subquiver $Q$ of the form
\[
Q_\Gamma:\vcenter{
\xymatrix@C=30pt@R=15pt
{x \ar@/_10pt/[r]_{\gamma_1} & \ar@/_10pt/[l]_{\gamma_2} 1 \ar@(ur,ul)_{\alpha} \ar@/_10pt/[r]_{\beta_1}  & \ar@/_10pt/[l]_{\beta_2} 2
}}.
\]
In fact, this is the whole of the $\Ext^1$-quiver of $\Gamma$.  In any case, the separated quiver of $\Gamma$ is of type $\widetilde{\mathbb{D}}_5$, and so $\Gamma$ is representation-infinite by Proposition $\ref{prop:RFRad}$.
\end{caseM}
\begin{caseM}[$M=D=\moDule{1}{12}{1}$]\label{case-ii}
The order 2 automorphism on $\mathbb{D}_{6}$ induces an autoequivalence on the stable module category $\stmod\Lambda$; see \cite{Asa2}.  Composing this with a sufficient power of $\tau$ yields an autoequivalence $F$ of $\stmod\Lambda$ that satisfies $F(D)=S_1$.  Hence, it follows from Proposition \ref{prop:nuder} and the previous case \eqref{case-i} that $\Gamma$ is representation-infinite.
\end{caseM}

Let us now go back to the general case $B_n^\epsilon$ and prove Theorem \ref{GSofNRBT}.  We recall the following useful lemma to help us reduce to easier situations.

\begin{lemma}\label{IdempRingIsom}
Let $M$ be a module over an algebra $\Lambda$ and $e$ an idempotent of $\Lambda$. 
If $M$ is isomorphic to $Me\Lambda$,  
then
$(e+e_M)\End_\Lambda(\Lambda\oplus M)(e+e_M)$
is isomorphic to 
$\End_{e\Lambda e}(e\Lambda e \oplus Me)$
as an algebra. 
\end{lemma}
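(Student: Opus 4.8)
The plan is to realize both endomorphism rings as Morita context rings and to compare them entry-by-entry through the idempotent functor $(-)e=\Hom_\Lambda(e\Lambda,-)\colon\mod\Lambda\to\mod e\Lambda e$. First I would write $\End_\Lambda(\Lambda\oplus M)$ as the Auslander context of Example \ref{eg:Morita-context}(2), under which the idempotent $e+e_M$ becomes the diagonal element $\left(\begin{smallmatrix} e & 0\\ 0 & 1\end{smallmatrix}\right)$. Cutting down by an idempotent of an endomorphism ring is the same as restricting to the image summand, here $e\Lambda\oplus M$ of $\Lambda\oplus M$, so that
\[
(e+e_M)\End_\Lambda(\Lambda\oplus M)(e+e_M)\cong\End_\Lambda(e\Lambda\oplus M)\cong\begin{pmatrix} e\Lambda e & \Hom_\Lambda(M,e\Lambda)\\ Me & \End_\Lambda(M)\end{pmatrix},
\]
using $\End_\Lambda(e\Lambda)\cong e\Lambda e$, the identification $\Hom_\Lambda(e\Lambda,M)\cong Me$ for the lower-left entry, and $e\Hom_\Lambda(M,\Lambda)\cong\Hom_\Lambda(M,e\Lambda)$ for the upper-right entry. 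On the other side, $\End_{e\Lambda e}(e\Lambda e\oplus Me)$ is the Morita context ring with diagonal $e\Lambda e$ and $\End_{e\Lambda e}(Me)$, and off-diagonal entries $Me$ and $\Hom_{e\Lambda e}(Me,e\Lambda e)$.

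Next I would produce the comparison as an entrywise isomorphism of these two matrix rings induced by $(-)e$. The upper-left corners are literally equal and the lower-left corners $Me$ agree on the nose, so all the content sits in the two right-hand entries, where $(-)e$ supplies
\[
\Hom_\Lambda(M,e\Lambda)\to\Hom_{e\Lambda e}(Me,e\Lambda e)\quad\text{and}\quad\End_\Lambda(M)\to\End_{e\Lambda e}(Me).
\]
Both are instances of the single natural transformation $\Hom_\Lambda(M,Y)\to\Hom_{e\Lambda e}(Me,Ye)$, $\phi\mapsto\phi e$, evaluated at $Y=e\Lambda$ and $Y=M$ respectively.

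The crux is the bijectivity of this natural transformation, and this is exactly where the hypothesis $M\cong Me\Lambda$ enters. The functor $(-)e$ has left adjoint $-\otimes_{e\Lambda e}e\Lambda$, whose counit is the canonical map $\varepsilon_M\colon Me\otimes_{e\Lambda e}e\Lambda\to M$ with image the trace submodule $Me\Lambda$; the hypothesis says precisely that $\varepsilon_M$ is an isomorphism, i.e. that $M$ lies in the essential image of $-\otimes_{e\Lambda e}e\Lambda$ (equivalently, $M$ is presented by objects of $\add(e\Lambda)$). Composing $\Hom_\Lambda(M,Y)\cong\Hom_\Lambda(Me\otimes_{e\Lambda e}e\Lambda,\,Y)\cong\Hom_{e\Lambda e}(Me,Ye)$ with the adjunction isomorphism, and checking that the composite is the map $\phi\mapsto\phi e$ above, gives bijectivity for every $Y$, and in particular for the two entries needed.

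Finally I would verify compatibility: that the four entrywise isomorphisms intertwine the $e\Lambda e$-bimodule structures and carry the two defining multiplication maps of the Auslander context (the evaluation $\Hom_\Lambda(M,e\Lambda)\otimes Me\to e\Lambda e$ and its companion valued in $\End_\Lambda(M)$) to the analogous evaluation and composition maps defining $\End_{e\Lambda e}(e\Lambda e\oplus Me)$. This is routine, since every structure map is built from composition and evaluation, which $(-)e$ respects. I expect the main obstacle to be the bijectivity step for the right-hand entries: one must convert the generation hypothesis into the statement that $\varepsilon_M$ is an \emph{isomorphism} before the adjunction argument applies, since mere surjectivity of $\varepsilon_M$ only yields injectivity of $\phi\mapsto\phi e$. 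The compatibility checks, though necessary for an honest proof, are mechanical by comparison.
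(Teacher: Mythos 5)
Your overall route is the same as the paper's: identify $\Hom_\Lambda(X,Y)$ with $\Hom_{e\Lambda e}(Xe,Ye)$ via the adjunction $\Hom_\Lambda(Xe\otimes_{e\Lambda e}e\Lambda,Y)\cong\Hom_{e\Lambda e}(Xe,Ye)$ after identifying $X$ with $Xe\otimes_{e\Lambda e}e\Lambda$; you merely unpack this entry-by-entry in Morita-context form. However, the step you yourself flag as ``the main obstacle'' is a genuine gap, and your earlier sentence ``the hypothesis says precisely that $\varepsilon_M$ is an isomorphism'' is where it hides: $M\cong Me\Lambda$ says only that the counit $\varepsilon_M\colon Me\otimes_{e\Lambda e}e\Lambda\to M$ is \emph{surjective} (its image is the trace $Me\Lambda$), and, as you correctly observe at the end, surjectivity of $\varepsilon_M$ yields only injectivity of $\phi\mapsto\phi e$. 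Lying in the essential image of $-\otimes_{e\Lambda e}e\Lambda$ is equivalent to being \emph{presented}, not merely generated, by $\add(e\Lambda)$, which is strictly stronger than the stated hypothesis. Since you never supply the missing surjectivity argument for the two right-hand entries, the proof is incomplete.

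Moreover, this gap cannot be closed from the stated hypothesis alone. Take $\Lambda=KQ$ for the Kronecker quiver $Q\colon 1\rightrightarrows 2$ with arrows $\alpha,\beta$, let $e=e_1$ and $M=e_1\Lambda/\beta\Lambda$. Then $Me\Lambda=M$, yet $\Hom_\Lambda(M,e\Lambda)=0$ while $\Hom_{e\Lambda e}(Me,e\Lambda e)\cong\Hom_K(K,K)\neq 0$; concretely, $\dim_K(e+e_M)\End_\Lambda(\Lambda\oplus M)(e+e_M)=\dim_K\End_\Lambda(e\Lambda\oplus M)=3$, whereas $\End_{e\Lambda e}(e\Lambda e\oplus Me)\cong M_2(K)$ has dimension $4$. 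For comparison, the paper's own proof makes the same silent identification $Xe\Lambda=Xe\otimes_{e\Lambda e}e\Lambda$ and therefore has the identical defect: the lemma requires the stronger hypothesis that $\varepsilon_M$ be bijective (equivalently, that $M$ admit a presentation by modules in $\add(e\Lambda)$, or that $\Ker\varepsilon_M$, which is killed by $e$, admit no nonzero homomorphism to $e\Lambda\oplus M$). One should then verify this stronger condition for the specific modules to which the lemma is applied in the proof of Proposition \ref{GSofNRBT}.
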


\begin{proof}
Since $X\cong Xe\Lambda = Xe\otimes_{e\Lambda e}e\Lambda$, we have bifunctorial isomorphisms
\[
\Hom_\Lambda(X,Y) = \Hom_\Lambda(Xe\otimes_{e\Lambda e}e\Lambda, Y)\cong \Hom_{e\Lambda e}(Xe, \Hom_\Lambda(e\Lambda,Y)) = \Hom_{e\Lambda e}(Xe, Ye).
\]
By the assumption, we can take $X=Y=M\oplus e\Lambda$ which yields $(e+e_M)\End_\Lambda(X)(e+e_M)\cong \End_{\Lambda}(M\oplus e\Lambda) \cong \End_{e\Lambda e}(e\Lambda e\oplus Me)$; note that this is indeed an algebra (iso)morphism thanks to functoriality.
\end{proof}

We now proceed to prove the main result of the section.

\begin{proof}[Proof of Proposition \ref{GSofNRBT}]

Let $M$ be one of the indecomposable $B_n^\epsilon$-modules listed in Proposition \ref{modMBTAlg}.
For simplicity, denote by $B$ and $A$ the algebras $B_n^\epsilon$ and $\End_{B_n^{\epsilon}}(B_n^{\epsilon}\oplus M)$ respectively.
We keep the notation $e_M$ and $e_i \in B$ with $i\in\{1,2,\ldots,n\}$ for the primitive idempotents of $A$ as in the case of $n=2$.

\underline{(2)$\Rightarrow$(1)}: By Proposition \ref{prop:nusta} (3), we can take $M=\rad P_i$ for any $i\neq 1$.  Then Lemma \ref{lem:reject} and Lemma \ref{EndMBTArad} says precisely that the resulting gendo-symmetric algebra $A$ is representation-finite.

Note that the almost $\nu$-stable derived equivalence statement follows from the description of $A=\End_B(B\oplus M)$ in Lemma \ref{EndMBTArad}.

\underline{(1)$\Rightarrow$(2)}: We will show that when $M$ is one of the representatives in Proposition \ref{modMBTAlg} and not isomorphic to $\rad P_i$ for any $i\neq 1$, then the resulting gendo-symmetric algebra $A$ is representation-infinite.  Then the implication follows by Proposition \ref{prop:nuder}.

To show $A$ is representation-infinite, we will pick an idempotent $e\in A$ so that Lemma \ref{IdempRingIsom} can be applied.  This yields an algebra isomorphism $(e+e_M)A(e+e_M) \cong \End_{B_{2}^\epsilon}(B_{2}^\epsilon \oplus Me)$ with $Me$ indecomposable.  The idempotent $e$ and the indecomposables $M, Me$ are the following:
\[
\begin{array}{c|cccccc}
M & H_{r,1} & L_{0,0} & L_{t,0} & L_{n-1,s} & S_1 & D \\
 & \scriptstyle 2\leq r\leq n &  & \scriptstyle 1\leq t<n & \scriptstyle 1\leq s<n & & 
\\ \hline
e & e_1+e_2 & e_1+e_2 & e_1+e_{n-t+1} & e_1+e_2 & e_1+e_2 & e_1+e_2 \\ \hline
Me & L_{0,0} & L_{0,0} & L_{1,0} & L_{1,1} & S_1 & D.
\end{array}
\]
Now our claim follows from the calculation done in the case of $B_2^\epsilon$.
\end{proof}

\begin{corollary}\label{classify-modiBr}
Let $B$ be a modified Brauer tree algebra with $|B|=n$, $M$ be an indecomposable non-projective $B$-module, and $\Gamma:=\End_B(B\oplus M)$ be the associated gendo-symmetric algebra. Then the following are equivalent: 
\begin{enumerate}
\item $\Gamma$ is representation-finite.
\item $M$ is in the $\tau$-orbit given by the vertex $1$ of $\mathbb{D}_{3n}$.
\item $\Gamma$ is almost $\nu$-stable derived equivalent to $B_{n+1}^\epsilon/\soc P_i$ for some $i\neq1$.
\end{enumerate}
\end{corollary}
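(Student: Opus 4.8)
The plan is to reduce the general modified Brauer tree case to the modified Brauer \emph{star} case already settled in Proposition \ref{GSofNRBT}, transporting everything along a derived equivalence. Since $B$ is a modified Brauer tree algebra with $n$ simples, it is representation-finite symmetric of RFSy-type $(\mathbb{D}_{3n},1/3)$, so by Theorem \ref{thm:asa}(iii) there is a derived equivalence $F\colon \Db(\mod B)\to\Db(\mod B_n^\epsilon)$ for the appropriate parameter $\epsilon$. As both algebras are symmetric, $F$ is automatically almost $\nu$-stable (as noted in the proof of Proposition \ref{prop:nuder}), and hence by Proposition \ref{prop:nusta}(1) it induces a stable equivalence $\Phi:=\overline{F}\colon\stmod B\to\stmod B_n^\epsilon$.

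For the equivalence (1)$\Leftrightarrow$(2), I would first observe that $\Phi$ commutes with the Auslander--Reiten translation, so it gives an isomorphism of the common stable AR-quiver $\mathbb{Z}\mathbb{D}_{3n}/\langle\tau^{2n-1}\rangle$, matching up $\tau$-orbits (rows); by Riedtmann-type structure theory its underlying action on the tree $\mathbb{D}_{3n}$ is by a graph automorphism. The row of vertex $1$ is the orbit of the valency-$1$ endpoint of the long branch of $\mathbb{D}_{3n}$, and since $3n\geq 6$ this vertex is fixed by every graph automorphism of $\mathbb{D}_{3n}$; hence $M$ lies in row $1$ if and only if $\Phi(M)$ does. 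Now Proposition \ref{prop:nuder} gives that $\Gamma=\End_B(B\oplus M)$ is representation-finite if and only if $\End_{B_n^\epsilon}(B_n^\epsilon\oplus\Phi(M))$ is, while Proposition \ref{GSofNRBT} identifies the latter condition with $\Phi(M)$ lying in row $1$. Chaining these equivalences yields (1)$\Leftrightarrow$(2).

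For (3)$\Rightarrow$(1), the algebra $B_{n+1}^\epsilon/\soc P_i$ is the quotient of the representation-finite symmetric algebra $B_{n+1}^\epsilon$ by the socle of a projective-injective module, hence representation-finite by the Rejection Lemma \ref{lem:reject}; since $B\oplus M$ is a generator, Proposition \ref{prop:main(3)=>(1)} then forces $\Gamma$ to be representation-finite. For (2)$\Rightarrow$(3), assuming $M$ is in row $1$ (so is $\Phi(M)$), applying Proposition \ref{prop:nusta}(3) to $F$ produces an almost $\nu$-stable derived equivalence between $\Gamma$ and $\End_{B_n^\epsilon}(B_n^\epsilon\oplus\Phi(M))$, and the final assertion of Proposition \ref{GSofNRBT} provides an almost $\nu$-stable derived equivalence between the latter and $B_{n+1}^\epsilon/\soc P_i$ for some $i\neq 1$; composing these two equivalences gives (3).

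The step requiring the most care is precisely this last composition: one must know that the composite of two almost $\nu$-stable derived equivalences is again almost $\nu$-stable, i.e. that the relation in (3) is transitive. This is a structural result of Hu and Xi \cite{HX} which is \emph{not} among the statements recalled in the excerpt, so it should be invoked explicitly (or else one verifies directly that the relevant tilting complexes are again appropriately $\nu$-stable). A secondary, more routine, subtlety is the identification of rows across $\Phi$: one has to argue that the abstract isomorphism of translation quivers respects the distinguished vertex $1$ rather than merely permuting rows, which is exactly where the automorphism-invariance of the long-branch endpoint of $\mathbb{D}_{3n}$ enters.
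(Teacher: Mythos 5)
Your proof follows essentially the same route as the paper's: transport everything along the stable equivalence to the modified Brauer star algebra $B_n^\epsilon$ (using that vertex $1$ of $\mathbb{D}_{3n}$ is fixed by every graph automorphism, exactly the paper's observation that only the orbits labelled $3n-1$ and $3n$ can be permuted), then apply Proposition \ref{GSofNRBT} for (1)$\Leftrightarrow$(2) and (2)$\Rightarrow$(3), and Proposition \ref{prop:main(3)=>(1)} together with the Rejection Lemma for (3)$\Rightarrow$(1). The transitivity of almost $\nu$-stable derived equivalence that you flag is a standard fact of Hu--Xi which the paper also uses implicitly, so your argument is complete and matches the paper's.
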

\begin{proof}
By Proposition \ref{prop:nuder}, there is a stable equivalence $F:\stmod B\to \stmod B_n^\epsilon$ so that representation-finiteness of $\Gamma$ is equivalent to that of $\End_{B_n^\epsilon}(B\oplus F(M))$. 
As explained in subsection \ref{subsec:RFS}, $F$ induces an automorphism on the $\tau$-orbits of indecomposable non-projective $B$-modules.  Since there is only one non-trivial graph automorphism $\phi$ on $\mathbb{D}_{3n}$, $F$ preserves all $\tau$-orbits with the possible exception of the two labelled by vertices $3n-1$ and $3n$.  On the other hand, as the $\tau$-orbit of $\rad P_i$ for any $i\neq 1$ is the one labelled by the vertex $1$ of $\mathbb{D}_{3n}$, the equivalence between (1) and (2) now follows from Proposition \ref{GSofNRBT}.

Suppose (2) holds.  Then using the stable equivalence $F$ in the previous paragraph and Lemma \ref{EndMBTArad}, we get (3) as required. 

Suppose (3) holds.  As $B_{n+1}^\epsilon$ is representation-finite, so is its quotient $B_{n+1}^\epsilon/\soc P_i$.  Hence, by Proposition \ref{prop:main(3)=>(1)}, $\Gamma$ is also representation-finite.
\end{proof}

\section{Main result and remarks}\label{sec:final}
\subsection{Proof of the main result}\label{subsec:proof}
Let us first show the analogue of Proposition \ref{classify-triv-ext} and Corollary \ref{classify-modiBr} for the case of Brauer tree algebras.

In the following, we denote by $N_{n,m}$ the symmetric Nakayama algebra with $n$ simples and Loewy length $mn+1$.  For detailed background of these algebras, and also of Brauer tree algebras, we refer to \cite{CM}.

The most important step we need is already done in \cite{B}, which we recall below; see also the next subsection \ref{subsec:conclu} for comparison of the strategies of its proof and ours in the case of RFSy-type $(\Delta,1)$ and $(\mathbb{D}_{3r},1/3)$.

\begin{proposition}{\rm \cite[Ch. 10, 11]{B}}\label{symmNaka case}
Let $B$ be a symmetric Nakayama algebra with $n\geq 1$ simples and Loewy length $nm+1$ for some $m>1$.
Let $M$ be an indecomposable non-projective $B$-module.
Then $\End_B(B\oplus M)$ is representation-finite if and only if $M$ is a simple module or the radical of an indecomposable projective module.
\end{proposition}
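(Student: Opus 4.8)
The plan is to follow the same strategy that worked for the trivial extension and modified Brauer star cases, namely to reduce everything to a representation-finiteness computation governed by an idempotent subalgebra via Lemma \ref{IdempRingIsom}, combined with the separated-quiver criterion of Proposition \ref{prop:RFRad}. By Proposition \ref{prop:nuder}, representation-finiteness of $\End_B(B\oplus M)$ depends only on the stable equivalence class of $M$, so it suffices to run the argument on one representative in each $\tau$-orbit of $B=N_{n,m}$. The stable AR-quiver of $N_{n,m}$ is of the form $\Z\mathbb{A}_{mn}/\langle\tau^{?}\rangle$, and one identifies convenient $\tau$-orbit representatives: the simple modules, the radicals of the indecomposable projectives, and the remaining uniserial modules of intermediate Loewy length. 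The goal is to show that the two distinguished families (simples and radicals of projectives) give representation-finite endomorphism algebras, while every other uniserial module produces a representation-infinite one.

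First I would handle the ``positive'' direction. For $M=\rad P_i$, the endomorphism ring $\End_B(\rad P_i)$ is simple (so no loop appears at the new vertex), and one checks directly, exactly as in Lemma \ref{EndMBTArad}, that $\End_B(B\oplus \rad P_i)$ is isomorphic to $N_{n,m}'/\soc P$ for a slightly enlarged symmetric Nakayama algebra $N_{n,m}'$; representation-finiteness then follows from the Rejection Lemma \ref{lem:reject}. For $M$ a simple module $S_i$, a parallel computation identifies $\End_B(B\oplus S_i)$ with the quotient of another representation-finite symmetric algebra by a socle, again invoking Lemma \ref{lem:reject}. The almost $\nu$-stable derived equivalence statement is read off from these explicit isomorphisms together with Proposition \ref{prop:nusta}.

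For the ``negative'' direction I would take $M$ a uniserial module that is neither simple nor a radical of a projective, so that its Loewy length $\ell$ satisfies $2\leq \ell\leq mn-1$ while $M\not\cong\rad P_i$. The key reduction is to choose an idempotent $e\in A:=\End_B(B\oplus M)$ supported on a small set of vertices adjacent to the support of $M$, verify the hypothesis $M\cong Me B$ of Lemma \ref{IdempRingIsom}, and thereby replace $A$ by the smaller algebra $\End_{eBe}(eBe\oplus Me)$. Since $eBe$ is again (a product of) truncated symmetric Nakayama algebras, this should collapse the problem to a handful of small base cases, much as the $B_n^\epsilon$ computation was reduced to $B_2^\epsilon$. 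In each base case I would exhibit either a loop pair at a vertex (so Example \ref{Exrad2} applies) or a separated quiver containing a Euclidean component, forcing representation-infiniteness through Proposition \ref{prop:RFRad}.

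The main obstacle I anticipate is the bookkeeping for the idempotent reduction: one must choose $e$ so that $Me$ remains indecomposable and the isomorphism $M\cong MeB$ genuinely holds, and this choice depends delicately on where $M$ sits among the uniserials (its top, socle, and Loewy length relative to $mn+1$). Unlike the modified Brauer star case, where the exceptional vertex and the loop $\alpha$ organized the representatives neatly, here the Nakayama structure means many uniserials of differing lengths must be treated, and verifying that each reduces to a representation-infinite small algebra rather than accidentally to the representation-finite $\rad P_i$ or $S_i$ configuration is the delicate point. I expect that, after the reduction, the separated quiver in every non-exceptional case contains a subquiver of extended Dynkin type (typically $\widetilde{\mathbb{A}}$ or $\widetilde{\mathbb{D}}$), so the final infiniteness verification is routine once the reduction is set up correctly.
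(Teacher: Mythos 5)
You should first note that the paper does not prove this proposition at all: it is imported wholesale from B\"ohmler's thesis \cite{B}, and the authors describe in subsection \ref{subsec:conclu} that the method there is to compute the \emph{universal covering} $\widetilde{\Gamma}$ of $\Gamma=\End_B(B\oplus M)$ and locate a representation-infinite subquiver (of type $\widetilde{\mathbb{D}}$) inside the cover. Your plan replaces this by the idempotent reduction of Lemma \ref{IdempRingIsom} plus the separated-quiver criterion, i.e.\ the strategy the paper uses for $B_n^\epsilon$. This is a genuinely different route, and unfortunately it has a real gap in the ``only if'' direction, not just a bookkeeping issue. The reduction $(e+e_M)A(e+e_M)\cong\End_{eBe}(eBe\oplus Me)$ only proves representation-infiniteness when the reduced algebra is itself detectably infinite, and for symmetric Nakayama algebras it frequently is not. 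Two concrete failures: (i) if $M$ has Loewy length $mn-1$ (e.g.\ $M\cong\rad^2P_i$), then for \emph{every} proper sum $e$ of $k$ consecutive idempotents one gets $eBe\cong N_{k,m}$ and $Me\cong\rad P'$ for an indecomposable projective $P'$ over $N_{k,m}$, which is exactly the representation-finite configuration, so no choice of $e$ detects anything; (ii) for $B=N_{2,m}$ and $M$ uniserial of length $2$, one computes $\End_B(M)\cong K$ (no loop at the new vertex), the cut to $e=e_1$ yields $Me\cong S$ simple over $K[x]/(x^{m+1})$ (again the representation-finite case), and the full quiver of $A$ has three vertices and four arrows whose separated quiver is a forest of Dynkin trees. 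So neither Example \ref{Exrad2} nor Proposition \ref{prop:RFRad} fires on any visible quotient or idempotent subalgebra, yet the algebra is representation-infinite. These irreducible base cases are precisely why \cite{B} passes to the universal cover: unwinding the cyclic quiver is what makes an extended Dynkin subquiver visible. Without that (or some substitute, e.g.\ exhibiting an explicit one-parameter family, or a quotient isomorphic to the path algebra of an acyclic $\widetilde{\mathbb{A}}_2$ via the two independent parallel paths $P_1\to P_2$ and $P_1\to M\to P_2$), your negative direction does not close.

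Two smaller points. Your assertion that $\End_B(\rad P_i)$ is simple is false for $m>1$: this endomorphism ring is local of dimension $m$; the correct statement is that all its radical endomorphisms factor through $P_i$ (extend along $\rad P_i\hookrightarrow P_i$ using injectivity), which is what actually rules out a loop at the new vertex. For the positive direction with $M$ simple, the paper avoids a second computation by using the stable self-equivalence $\Omega$ (which sends $\rad P_i$ to a simple) together with Proposition \ref{prop:nuder}; your direct computation would also work but is redundant.
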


It will be convenient to reformulate the above result as follows. 

\begin{corollary}\label{classify-Br}
Let $B$ be a representation-finite symmetric algebra of type $(\mathbb{A}_{mn},1/m)$ for some $m>1$ and $n\geq 1$, i.e. $B$ is a Brauer tree algebra with $n\geq 1$ simples and multiplicity $m>1$.
Let $\Gamma:=\End_B(B\oplus M)$ be the gendo-symmetric algebra associated to an indecomposable non-projective $B$-module $M$.
Then the following are equivalent: 
\begin{enumerate}
\item $\Gamma$ is representation-finite.
\item $v:=v(M)\in \{1,mn\}$, i.e. $M$ is a module on the mouth the the stable AR-quiver.
\item $\Gamma$ is almost $\nu$-stable derived equivalent to $N_{n+1,m}/\soc P$ for any indecomposable projective $N_{n+1,m}$-module $P$.
\end{enumerate}
\end{corollary}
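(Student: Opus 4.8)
The plan is to derive Corollary \ref{classify-Br} as a translation of Proposition \ref{symmNaka case} through the derived-equivalence representative chosen in Theorem \ref{thm:asa}(ii), following exactly the template already used in Proposition \ref{classify-triv-ext} and Corollary \ref{classify-modiBr}. Recall that by Theorem \ref{thm:asa}(ii), any representation-finite symmetric algebra $B$ of RFSy-type $(\mathbb{A}_{mn},1/m)$ is derived equivalent to the symmetric Nakayama algebra $N_{n,m}$ with $n$ simples and Loewy length $mn+1$. Since both algebras are symmetric, this derived equivalence is automatically almost $\nu$-stable, so by Proposition \ref{prop:nusta}(1) it induces a stable equivalence $F\colon\stmod B\to\stmod N_{n,m}$. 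The whole argument then runs parallel to the proof of Corollary \ref{classify-modiBr}.

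First I would establish the equivalence of (1) and (2). By Proposition \ref{prop:nuder}, representation-finiteness of $\Gamma=\End_B(B\oplus M)$ is equivalent to that of $\End_{N_{n,m}}(N_{n,m}\oplus F(M))$. Now $F$ induces a permutation of the $\tau$-orbits of indecomposable non-projective modules, i.e. of the vertices of $\mathbb{A}_{mn}$, compatible with the structure of the stable AR-quiver $\mathbb{Z}\mathbb{A}_{mn}/\langle\tau^{m_\Delta/m}\rangle$; in particular $F$ sends mouth modules to mouth modules, so $v(M)\in\{1,mn\}$ if and only if $F(M)$ lies on the mouth of the stable AR-quiver of $N_{n,m}$. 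By Proposition \ref{symmNaka case}, $\End_{N_{n,m}}(N_{n,m}\oplus F(M))$ is representation-finite precisely when $F(M)$ is simple or the radical of an indecomposable projective; since these are exactly the two $\tau$-orbits sitting at the mouth of the stable AR-quiver, this matches the condition $v(M)\in\{1,mn\}$, giving (1)$\Leftrightarrow$(2).

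Next I would handle (2)$\Rightarrow$(3). Assuming $F(M)$ is, say, the radical of an indecomposable projective $N_{n,m}$-module, one computes $\End_{N_{n,m}}(N_{n,m}\oplus F(M))$ directly. As in Lemma \ref{EndMBTArad}, adjoining $\rad P_i$ to the generator introduces exactly one new vertex into the $\Ext^1$-quiver with no attached loop, and one identifies the result as $N_{n+1,m}/\soc P$ for an indecomposable projective $N_{n+1,m}$-module $P$ by comparing quivers and Cartan matrices. The case where $F(M)$ is simple is dual (or symmetric under the graph automorphism) and yields the same conclusion. Then Proposition \ref{prop:nusta}(3) transports the isomorphism along $F$ to give an almost $\nu$-stable derived equivalence between $\Gamma$ and $N_{n+1,m}/\soc P$, which is (3). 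Finally, for (3)$\Rightarrow$(1): since $N_{n+1,m}$ is representation-finite, so is its quotient $N_{n+1,m}/\soc P$, and Proposition \ref{prop:main(3)=>(1)} then forces $\Gamma$ to be representation-finite.

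The one step requiring genuine care is the explicit identification in (2)$\Rightarrow$(3) of the endomorphism algebra as $N_{n+1,m}/\soc P$, but since symmetric Nakayama algebras are combinatorially very transparent this should pose no real difficulty. I expect the main conceptual obstacle to be tracking precisely how $F$ acts on the two mouth $\tau$-orbits and confirming that ``simple'' and ``radical of projective'' are interchanged only up to the graph automorphism of $\mathbb{A}_{mn}$; this is why the statement of (3) is phrased uniformly as ``for any indecomposable projective $N_{n+1,m}$-module $P$'', sidestepping any dependence on which mouth orbit $M$ belongs to.
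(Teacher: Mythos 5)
Your proposal is correct and follows essentially the same route as the paper: transfer to the symmetric Nakayama representative $N_{n,m}$ via Theorem \ref{thm:asa} and Proposition \ref{prop:nuder}, invoke Proposition \ref{symmNaka case} together with the identification of the mouth $\tau$-orbits with simples and radicals of projectives, compute $\End_{N_{n,m}}(N_{n,m}\oplus\rad Q)\cong N_{n+1,m}/\soc P$, and close the loop with Proposition \ref{prop:main(3)=>(1)}. The only cosmetic difference is in reducing the case where $F(M)$ is simple to the radical case: the paper composes $F$ with the cosyzygy functor $\Omega^{-1}$ (which sends a simple to the radical of an indecomposable projective for these Nakayama algebras), whereas you appeal to duality or the graph automorphism; both work.
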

\begin{proof}
By Proposition \ref{prop:nuder} and Theorem \ref{thm:asa}, there is a stable equivalence $F:\stmod B\to \stmod N_{n,m}$ such that $\Gamma$ is almost $\nu$-stable derived equivalent to $\Gamma':=\End_{N_{n,m}}(N_{n,m}\oplus F(M))$, and $\Gamma$ is representation-finite if and only if so is $\Gamma'$.

It follows from Proposition \ref{symmNaka case} that $\Gamma'$ is representation-finite if and only if $F(M)$ is a simple or a radical of an indecomposable projective $N_{n,m}$-module, which is then equivalent to $v(F(M))\in\{1,nm\}$ (c.f. \cite[Sec 2.3]{CM}).  Since $F$ either fixes all $\tau$-orbits, or permutes them in the way swaps the orbit labelled by $i$ with that labelled by $n-i$, we have that $v(F(M))\in \{1,mn\}$ if and only if so is $v(M)$.  This proves the equivalence between (1) and (2).

Suppose (2) holds.  Then $F(M)$ is either $\rad(Q)$ for some indecomposable projective $N_{n,m}$-module $P$, or a simple $N_{n,m}$-module $S$.  In the latter case, we replace $F$ by $\Omega^{-1}\circ F$ where $\Omega^{-1}$ is the cosyzygy functor.  Hence, we have $\Gamma' = \End_{N_{n,m}}(N_{n,m}\oplus \rad(Q))$ for some indecomposable projective $Q$.  An easy calculation shows that $\Gamma'\cong N_{n+1,m}/\soc P$ for any indecomposable projective module $P$; see \cite[Theorem 3.9]{CM} for the details.  This shows the implication from (2) to (3).

Since representation-finiteness of $N_{n+1,m}$ implies that of $N_{n+1,m}/\soc P$, it follows from Proposition \ref{prop:main(3)=>(1)} that (1) holds.
\end{proof}

The main result of each section now combines to the required proof of the main Theorem \ref{thm:main}.

\begin{proof}[Proof of Theorem \ref{thm:main}]
For simplicity, denote by $\Gamma$ the endomorphism ring $\End_B(B\oplus M)$.

\underline{(1)$\Rightarrow$(2)}:  Since $B\cong e\Gamma e$ for the idempotent given by the natural map $B\oplus M \twoheadrightarrow B\hookrightarrow B\oplus M$, $\Gamma$ being representation-finite means that so is $B$.  Now the condition of $v$ for each case of $(\Delta,f)$ follows from the implications `(1)$\Rightarrow$(2)' in Proposition \ref{classify-triv-ext} for trivial extension type $(\Delta,1)$, in Corollary \ref{classify-modiBr} for modified Brauer tree type $(\mathbb{D}_{3n},1/3)$, and in Corollary \ref{classify-Br} for type $(\mathbb{A}_{mn},1/m)$.

\underline{(2)$\Rightarrow$(3)$\Rightarrow$(1)}:  This is precisely the implications `(2)$\Rightarrow$(3)$\Rightarrow$(1)' in Proposition \ref{classify-triv-ext} for trivial extension type $(\Delta,1)$, in Corollary \ref{classify-modiBr} for modified Brauer tree type $(\mathbb{D}_{3n},1/3)$, and in Corollary \ref{classify-Br} for type $(\mathbb{A}_{mn},1/m)$.
\end{proof}

\subsection{Concluding remarks}\label{subsec:conclu}

\begin{enumerate}[leftmargin=5.5mm,itemsep=0.5cm]
\item Unifying strategy of proof and generalisation: 
The strategy in \cite{B} for determining the representation-infiniteness of $\Gamma:=\End_B(B\oplus M)$ for $B$ a symmetric Nakayama algebra and $v(M)\notin\{1,mn\}$ is given in two steps.  In the first step, one calculates the universal covering $\widetilde{\Gamma}$ of $\Gamma$ explicitly.  Then one tries to find a subquiver $Q$ of the quiver of $\widetilde{\Gamma}$ (or of $\Gamma$) such that $KQ$ is representation-infinite.  In this specific case of $B$ and $M$, it turns out that one can always find a subquiver of type $\tilde{\mathbb{D}}_n$.  

There is some similarities of the strategy of \cite{B} with ours in the case of trivial extensions - we replace their first step by showing an alternative description of $\Gamma$ that is somewhat like an orbit algebra, and replace their second step by looking at one-point coextension algebra (the algebra $\Tria(DM)$).  On the other hand, the strategy we used in this article for the case of RFSy-type $(\mathbb{D}_{3n},1/3)$ is rather ad-hoc.  It is desirable to unifying the different strategies for the three types of representation-finite symmetric algebras.

In view of this problem, we speculate that there is a generalised version of Lemma \ref{lem:triv(tria)}, where trivial extension is replaced by \emph{orbit algebra $\hat{\Lambda}/\langle\varphi\rangle$ of a repetitive algebra $\hat{\Lambda}$} for some `nice enough' automorphism $\phi$ of $\hat{\Lambda}$. 
Note that the case of trivial extension has $\Lambda=KQ$ and $\varphi$ being the Nakayama automorphism on $\hat{\Lambda}$.
Moreover, any representation-finite \emph{self-injective} algebra arises in this way.  In particular, a generalised Lemma \ref{lem:triv(tria)} in this setting could potentially help us classify the endomorphism algebras $\Gamma$ that are representation-finite with self-injective base.

\item As mentioned in the introduction, we expect that the condition of `$\Gamma:=\End_B(B\oplus M)$ being almost $\nu$-stale derived equivalent to $C/\soc P$' in Theorem \ref{thm:main} (3) can be strengthen to $\Gamma\cong C/\soc P$.  We give some evidence (that are not already explained previously, i.e. \eqref{eq:Triv(KQ')/soc} and Lemma \ref{EndMBTArad}) in the following.

\begin{enumerate}[(i)]
\item $B$ is of type $(\mathbb{A}_{mn},1/m)$ with $m>1$.  By our main theorem, the module $M$ must lie on the mouth of the stable AR-quiver of $B$.  Such a module is of the form $e_xB/\alpha B$ for some primitive idempotent $e_x$ corresponding to vertex $x$ on the quiver of $B$ and some arrow $\alpha$ starting from $x$; see \cite[Proposition 2.9]{CM}.  It then follows from \cite[Lemma 3.7]{CM} that $\Gamma$ is given by $C/\soc P$ with $C$ of type $(\mathbb{A}_{m(n+1)}, 1/m)$.

\item $B$ is of type $(\mathbb{A}_n,1)$ with $v(M)\in \{1,n\}$.  The argument here is completely the same as the previous case (i).

\item $B$ is of type $(\mathbb{D}_{3n},1/3)$ with $n>1$.  Then $B$ is adding an extra loop (the arrow labelled $\alpha$ in Definition \ref{def:modBSt}, or $\alpha_1$ in \cite[3.6]{Sko}) to the quiver of multiplicity-free Brauer tree (which is of RFSy-type $(\mathbb{A}_n,1)$), and adding some extra relations; see \cite{Sko}.  

Note that the unique $\tau$-orbit (which is also an $\Omega$-orbit) satisfying Theorem \ref{thm:main} for such an algebra $B$ consists of all modules of the form $e_iB/\beta B$ where $\beta$ is either an arrow in the quiver of $B$ that is not equal to the extra loop, or the maximal non-vanishing (in $B$) path for uniserial $e_iB$; this can be shown by the same argument in \cite[Proposition 2.9]{CM}.

One can then calculate explicitly $\End_B(B\oplus M)$ in a completely analogous way as \cite{CM} and see that it is isomorphic to $B'/\soc P$, where $B'$ is of type $(\mathbb{D}_{3(n+1)},1/3)$ and $P$ is the indecomposable projective corresponding to $M$.  In fact, the modified Brauer tree of $B'$ can be obtained by the same way as described in \cite{CM}, that is, adding an extra edge in the angle between $s(\beta)=i$ and $t(\beta)$ for the path $\beta$.
\end{enumerate}
\end{enumerate}


\section*{Acknowledgements}
This article started from the discussion on giving a conceptual explanation of a certain construction in the master thesis of Takuya Sakurai.  During the process of our investigation, we learnt about the work of Bernhard B\"{o}hmler \cite{B} through Ren\'{e} Marczinzik, as well as their on-going project on the same theme via a different method.  We greatly appreciate their correspondences on this matter.

\newpage

\end{document}